\title{Strong Ramsey Games: Drawing on an infinite board}

\author{
Dan Hefetz
\thanks{Department of Computer Science, Hebrew University, Jerusalem 9190401 and School of Mathematical Sciences, Raymond and Beverly Sackler Faculty of Exact Sciences, Tel Aviv University, 6997801, Israel. Email: danny.hefetz@gmail.com.} 
\and Christopher Kusch 
\thanks{Institut f\"ur Mathematik und Informatik, Freie Universit\"at Berlin and Berlin Mathematical School, Germany. Email: c.kusch@zedat.fu-berlin.de. Research supported by a Berlin Mathematical School Phase II scholarship.}
\and Lothar Narins
\thanks{Institut f\"ur Mathematik und Informatik, Freie Universit\"at Berlin, Germany. Email: Narins@math.fu-berlin.de}
\and Alexey Pokrovskiy
\thanks{ETH Zurich, Switzerland. Email: DrAlexeyPokrovskiy@gmail.com}
\and Cl\'ement Requil\'e
\thanks{Institut f\"ur Mathematik und Informatik, Freie Universit\"at Berlin and Berlin Mathematical School, Germany. Research supported by the FP7-PEOPLE-2013-CIG project CountGraph (ref. 630749). Email: requile@math.fu-berlin.de} 
\and Amir Sarid
\thanks{School of Mathematical Sciences, Raymond and Beverly Sackler Faculty of Exact Sciences, Tel Aviv University, 6997801, Israel. Email: amirsar1@mail.tau.ac.il} 
}

\documentclass[11pt]{article}
\usepackage{graphicx}

\usepackage{amsmath,amsthm,amssymb,latexsym,color,epsfig,a4,enumerate}

\parindent 0in
\parskip 2mm

\addtolength{\textwidth}{1.4in} \addtolength{\oddsidemargin}{-0.5in}
\addtolength{\evensidemargin}{-0.8in}
\addtolength{\topmargin}{-0.5in} \addtolength{\textheight}{0.1in}

\newtheorem{theorem}{Theorem} [section]
\newtheorem{lemma}[theorem]{Lemma}

\newtheorem{claim}[theorem]{Claim}

\newtheorem{observation}[theorem]{Observation}

\newtheorem{question}[theorem]{Question}

\begin{document}

\maketitle

\begin{abstract}
We consider the strong Ramsey-type game $\mathcal{R}^{(k)}(\mathcal{H}, \aleph_0)$, played on the edge set of the infinite complete $k$-uniform hypergraph $K^k_{\mathbb{N}}$. Two players, called FP (the first player) and SP (the second player), take turns claiming edges of $K^k_{\mathbb{N}}$ with the goal of building a copy of some finite predetermined $k$-uniform hypergraph $\mathcal{H}$. The first player to build a copy of $\mathcal{H}$ wins. If no player has a strategy to ensure his win in finitely many moves, then the game is declared a draw. 

In this paper, we construct a $5$-uniform hypergraph $\mathcal{H}$ such that $\mathcal{R}^{(5)}(\mathcal{H}, \aleph_0)$ is a draw. This is in stark contrast to the corresponding finite game $\mathcal{R}^{(5)}(\mathcal{H}, n)$, played on the edge set of $K^5_n$. Indeed, using a classical game-theoretic argument known as \emph{strategy stealing} and a Ramsey-type argument, one can show that for every $k$-uniform hypergraph $\mathcal{G}$, there exists an integer $n_0$ such that FP has a winning strategy for $\mathcal{R}^{(k)}(\mathcal{G}, n)$ for every $n \geq n_0$. 
\end{abstract}

\section{Introduction}

The theory of positional games on graphs and hypergraphs goes back to the seminal papers of Hales and Jewett~\cite{HJ} and of Erd\H{o}s and Selfridge~\cite{ES}. The theory has enjoyed explosive growth in recent years and has matured into an important area of combinatorics (see the monograph of Beck~\cite{TTT}, the recent monograph~\cite{HKSSbook} and the survey~\cite{Krivelevich}). There are several interesting types of positional games, the most natural of which are the so-called \emph{strong games}. 

Let $X$ be a (possibly infinite) set and let $\mathcal{F}$ be a family of finite subsets of $X$. The \emph{strong game} $(X, \mathcal{F})$ is played by two players, called FP (the first player) and SP (the second player), who take turns claiming previously unclaimed elements of the \emph{board} $X$, one element per move. The winner of the game is the \emph{first} player to claim all elements of a \emph{winning set} $A \in \mathcal{F}$. If no player wins the game after some finite number of moves, then the game is declared a \emph{draw}. A very simple but classical example of this setting is the game of Tic-Tac-Toe.

Unfortunately, strong games are notoriously hard to analyze and to date not much is known about them. A simple yet elegant game-theoretic argument, known as \emph{strategy stealing}, shows that FP is guaranteed at least a draw in any strong game. Moreover, using Ramsey Theory, one can sometimes prove that draw is impossible in a given strong game and thus FP has a winning strategy for this game. Note that these arguments are purely existential and thus even if we know that FP has a winning/drawing strategy for some game, we might not know what it is. Explicit winning strategies for FP in various natural strong games were devised in~\cite{FH} and in~\cite{FHkcon}. These strategies are based on fast winning strategies for \emph{weak} variants of the games in question. More on fast winning strategies can be found in~\cite{HKSS} and~\cite{CFGHL}.  

In this paper we study a natural family of strong games. For integers $n \geq q \geq 3$, consider the strong Ramsey game $\mathcal{R}(K_q, n)$. The board of this game is the edge set of $K_n$ and the winning sets are the copies of $K_q$ in $K_n$. As noted above, by strategy stealing, FP has a drawing strategy in $\mathcal{R}(K_q, n)$ for every $n$ and $q$. Moreover, it follows from Ramsey's famous Theorem~\cite{Ramsey} (see also~\cite{GRS} and~\cite{CFSsurvey} for numerous related results) that, for every $q$, there exists an $n_0$ such that $\mathcal{R}(K_q, n)$ has no drawing position and is thus FP's win for every $n \geq n_0$. An explicit winning strategy for FP in $\mathcal{R}(K_q, n)$ is currently known (and is very easy to find) only for $q = 3$ (and every $n \geq 5$). Moreover, for every $q \geq 4$, we do not know what is the smallest $n_0 = n_0(q)$ such that $\mathcal{R}(K_q, n)$ is FP's win for every $n \geq n_0$. Determining this value seems to be extremely hard even for relatively small values of $q$.             

Consider now the strong game $\mathcal{R}(K_q, \aleph_0)$. Its board is the edge set of the countably infinite complete graph $K_{\mathbb{N}}$ and its winning sets are the copies of $K_q$ in $K_{\mathbb{N}}$. Even though the board of this game is infinite, strategy stealing still applies, i.e., FP has a strategy which ensures that SP will never win $\mathcal{R}(K_q, \aleph_0)$. Clearly, Ramsey's Theorem applies as well, i.e., any red/blue colouring of the edges of $K_{\mathbb{N}}$ yields a monochromatic copy of $K_q$. Hence, as in the finite version of the game, one could expect to combine these two arguments to deduce that FP has a winning strategy in $\mathcal{R}(K_q, \aleph_0)$. The only potential problem with this reasoning is that, by making infinitely many threats (which are idle, as he cannot win), SP might be able to delay FP indefinitely, in which case the game would be declared a draw. As with the finite version, $\mathcal{R}(K_3, \aleph_0)$ is an easy win for FP. The question whether $\mathcal{R}(K_q, \aleph_0)$ is a draw or FP's win is wide open for every $q \geq 4$. In fact, using different terminology, it was posed by Beck~\cite{TTT} as one of his ``7 most humiliating open problems'', where he considers even the case $q = 5$ to be ``hopeless'' (see also~\cite{Leader} and~\cite{Bowler} for related problems).

Playing Ramsey games, we do not have to restrict our attention to cliques, or even to graphs for that matter. For every integer $k \geq 2$ and every $k$-uniform hypergraph $\mathcal{H}$, we can study the finite strong Ramsey game $\mathcal{R}^{(k)}(\mathcal{H}, n)$ and the infinite strong Ramsey game $\mathcal{R}^{(k)}(\mathcal{H}, \aleph_0)$. The board of the finite game $\mathcal{R}^{(k)}(\mathcal{H}, n)$ is the edge set of the complete $k$-uniform hypergraph $K_n^k$ and the winning sets are the copies of $\mathcal{H}$ in $K_n^k$. As in the graph case, strategy stealing and Hypergraph Ramsey Theory (see, e.g.,~\cite{CFS}) shows that FP has winning strategies in $\mathcal{R}^{(k)}(\mathcal{H}, n)$ for every $\mathcal{H}$ and every sufficiently large $n$. The board of the infinite game $\mathcal{R}^{(k)}(\mathcal{H}, \aleph_0)$ is the edge set of the countably infinite complete $k$-uniform hypergraph $K_{\mathbb{N}}^k$ and the winning sets are the copies of $\mathcal{H}$ in $K_{\mathbb{N}}^k$. As in the graph case, strategy stealing shows that FP has drawing strategies in $\mathcal{R}^{(k)}(\mathcal{H}, \aleph_0)$ for every $\mathcal{H}$. Hence, here too one could expect to combine strategy stealing and Hypergraph Ramsey Theory to deduce that FP has a winning strategy in $\mathcal{R}^{(k)}(\mathcal{H}, \aleph_0)$ for every $\mathcal{H}$.

Our main result shows that, while it might be true that $\mathcal{R}(K_q, \aleph_0)$ is FP's win for any $q \geq 4$, basing this solely on strategy stealing and Ramsey Theory is ill-founded.   

\begin{theorem} \label{th::main}
There exists a $5$-uniform hypergraph ${\mathcal H}$ such that the strong game $\mathcal{R}^{(5)}(\mathcal{H}, \aleph_0)$ is a draw.
\end{theorem}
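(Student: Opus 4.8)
The plan is to reduce the theorem to a purely combinatorial statement and then realise that statement by an explicit construction. By the strategy stealing argument quoted in the introduction, SP can have no winning strategy in $\mathcal R^{(5)}(\mathcal H,\aleph_0)$; hence, to show that the game is a draw, it suffices to produce a single $5$-uniform hypergraph $\mathcal H$ together with a strategy for SP that prevents FP from ever completing a copy of $\mathcal H$ in finitely many moves. Indeed, if such an SP strategy exists then FP can have no winning strategy either, and a game in which neither player can force a win is by definition a draw. So the entire content of the theorem lies in designing $\mathcal H$ so that the \emph{second} player can block it on the infinite board.

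The natural blocking device for the second player is a \textbf{pairing strategy}. I would fix in advance a set $M$ of pairwise disjoint pairs of edges of $K^5_{\mathbb N}$ --- a partial matching on $E(K^5_{\mathbb N})$ --- with the key property that \emph{every} copy of $\mathcal H$ in $K^5_{\mathbb N}$ contains both edges of at least one pair of $M$. SP then plays by the rule: whenever FP claims an edge $e$ that lies in a pair $\{e,e'\}\in M$ whose partner $e'$ is still free, SP answers with $e'$; otherwise SP claims some untouched edge far from all current action, so that SP never threatens to complete $\mathcal H$ himself. Under this rule FP can never own both edges of a pair of $M$, and therefore never owns all edges of any copy of $\mathcal H$. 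Thus the theorem reduces to the following: there is a $5$-uniform $\mathcal H$ and a matching $M$ on $E(K^5_{\mathbb N})$ that meets every embedded copy of $\mathcal H$ in a full pair. This is exactly the feature that Ramsey's theorem forbids for cliques, so the construction must exploit a hypergraph whose copies are far more rigid than those of $K_q$.

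For the construction I would glue two isomorphic copies of a carefully chosen gadget so as to force, in every copy of $\mathcal H$, a recognisable \emph{critical pair} of edges with a prescribed intersection pattern --- say two edges $e_1=S\cup\{a\}$ and $e_2=S\cup\{b\}$ sharing a common $4$-set $S$ --- and I would arrange, by imposing suitable degree and codegree conditions around $S$, that this pattern is detectable intrinsically and cannot be avoided by any embedding. The global matching $M$ is then defined by pairing two edges precisely when they sit in this detectable configuration, the symmetry between the two glued gadgets guaranteeing that completing $\mathcal H$ genuinely requires claiming both $e_1$ and $e_2$. One must simultaneously check that these local rules assign each edge of $K^5_{\mathbb N}$ to at most one pair, so that $M$ is a legitimate matching.

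The \textbf{main obstacle} is precisely this simultaneous design: one has to prove a rigidity lemma stating that, among the infinitely many and arbitrarily placed embeddings of $\mathcal H$ into $K^5_{\mathbb N}$, every single one realises the critical pair with the intended intersection pattern, and that no ``accidental'' copy of $\mathcal H$ slips through the matching. Balancing this against the requirement that $M$ remain a fixed disjoint matching is the crux of the argument, and it is what dictates the exact gadget and the value $k=5$. The infiniteness of the board, by contrast, causes no real trouble for a pairing strategy: each FP move is answered locally and in finite time, so SP never has to contend with FP's necessarily idle long-range threats, and any hypothetical FP win would have to be completed on a concrete finite pair that SP has already preempted.
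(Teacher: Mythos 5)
Your reduction itself is sound (strategy stealing disposes of SP's side, so it suffices to give SP a strategy that stops FP), but the device you propose to achieve this -- a pairing strategy -- cannot work, and the gap is not the deferred ``rigidity lemma'' being hard: the statement you need is false for \emph{every} choice of $\mathcal{H}$ and $M$. Let $M$ be any matching on $E(K^5_{\mathbb{N}})$ and let $\mathcal{H}$ be any $5$-graph with $v$ vertices and at least two edges. Restrict to the clique on $[N]$. The number of copies of $\mathcal{H}$ in $K^5_N$ is at least $cN^v$ for some constant $c>0$, whereas the number of pairs of $M$ with both edges inside $[N]$ is at most $\frac12\binom{N}{5}=O(N^5)$. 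For a fixed pair $\{e_1,e_2\}\in M$ we have $|e_1\cup e_2|\ge 6$, so the number of copies of $\mathcal{H}$ containing both $e_1$ and $e_2$ among their edges is $O(N^{v-6})$: there are $O(1)$ ways to decide which two edges of $\mathcal{H}$ map onto $e_1,e_2$ and how, and $O(N^{v-6})$ placements of the remaining vertices. Hence at most $O(N^5)\cdot O(N^{v-6})=O(N^{v-1})$ copies of $\mathcal{H}$ in $K^5_N$ contain a full pair of $M$, a vanishing fraction. So for large $N$ almost every copy of $\mathcal{H}$ contains no pair of $M$, and your key property (``every copy of $\mathcal{H}$ contains both edges of some pair'') is unattainable no matter how the gadget is engineered. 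Worse, FP actually \emph{beats} any pairing strategy: he fixes a copy of $\mathcal{H}$ containing no full pair of $M$ and claims its edges one by one; SP's replies are either partners of those edges (which, by choice of the copy, lie outside it) or edges ``far from the action'', so FP completes the copy unhindered. Copies of $\mathcal{H}$ have density $\Theta(N^v)$ with $v\ge 6$, while any matching has density $O(N^5)$; no static, precomputed response scheme can cover them all.

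What the paper does instead is inherently adaptive: the blocking structure is created during the game, relative to the evolving position, rather than fixed in advance. The hypergraph $\mathcal{H}$ is designed with a degree-$2$ vertex $z$ such that $\mathcal{H}\setminus\{z\}$ can be built in $|E(\mathcal{H})|-2$ moves (a fast winning strategy). SP spends Stage I racing to build his own copy of $\mathcal{H}\setminus\{z\}$; this gives him an inexhaustible reservoir of counter-threats, since any fresh vertex $z'$ together with this copy spans two free edges whose addition completes $\mathcal{H}$. Thereafter SP blocks FP's threat whenever one exists (Stage II) and otherwise makes a counter-threat of his own (Stage III), so FP is forever forced to respond and can never assemble a position with two simultaneous threats. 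The heavy lifting is a rigidity condition -- every monomorphism of $\mathcal{H}$ minus two edges into $\mathcal{H}$ is the identity -- together with degree and intersection conditions, which guarantee that FP's threats are always unique and hence blockable by a single move. If you want to salvage your outline, the ingredient to replace the matching is some mechanism of this threat-trading kind; a fixed pairing cannot do the job.
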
 

Apart from being very surprising, Theorem~\ref{th::main} might indicate that strong Ramsey games are even more complicated than we originally suspected. We discuss this further in Section~\ref{sec::openprob}.                    

The rest of this paper is organized as follows. In Section~\ref{sec::notation} we introduce some basic notation and terminology that will be used throughout this paper. In Section~\ref{sec::sufficientCondition} we prove that $\mathcal{R}^{(k)}(\mathcal{H}, \aleph_0)$ is a draw whenever $\mathcal{H}$ is a $k$-uniform hypergraph which satisfies certain conditions. Using the results of Section~\ref{sec::sufficientCondition}, we construct in Section~\ref{sec::example} a $5$-uniform hypergraph ${\mathcal H}_5$ for which $\mathcal{R}^{(5)}(\mathcal{H}_5, \aleph_0)$ is a draw, thus proving Theorem~\ref{th::main}. Finally, in Section~\ref{sec::openprob} we present some open problems.    

\section{Notation and terminology} \label{sec::notation}
Let $\mathcal{H}$ be a $k$-uniform hypergraph. We denote its vertex set by $V(\mathcal{H})$ and its edge set by $E(\mathcal{H})$. The \emph{degree} of a vertex $x \in V(\mathcal{H})$ in $\mathcal{H}$, denoted by $d_{\mathcal H}(x)$, is the number of edges of $\mathcal{H}$ which are incident with $x$. The \emph{minimum degree} of $\mathcal{H}$, denoted by $\delta(\mathcal{H})$, is $\min \{d_{\mathcal H}(u) : u \in V(\mathcal{H})\}$. We will often use the terminology \emph{$k$-graph} or simply \emph{graph} rather than $k$-uniform hypergraph.

A \emph{tight path} is a $k$-graph with vertex set $\{u_1, \ldots, u_t\}$ and edge set $e_1, \ldots, e_{t-k+1}$ such that $e_i = \{u_i, \ldots, u_{i+k-1}\}$ for every $1 \leq i \leq t-k+1$. The \emph{length} of a tight path is the number of its edges.
           
We say that a $k$-graph $\mathcal{F}$ has a \emph{fast winning strategy} if a player can build a copy of $\mathcal{F}$ in $|E(\mathcal{F})|$ moves (note that this player is not concerned about his opponent building a copy of $\mathcal{F}$ first).      

\section{Sufficient conditions for a draw} \label{sec::sufficientCondition}
In this section we list several conditions on a $k$-graph $\mathcal{H}$ which suffice to ensure that $\mathcal{R}^{(k)}(\mathcal{H}, \aleph_0)$ is a draw. 

\begin{theorem} \label{th::HypergraphProperties}
Let $\mathcal{H}$ be a $k$-graph which satisfies all of the following properties: 
\begin{description}
\item [(i)] $\mathcal{H}$ has a degree $2$ vertex $z$;
\item [(ii)] $\delta(\mathcal{H} \setminus \{z\}) \geq 3$ and $d_{\mathcal H}(u) \geq 4$ for every $u \in V({\mathcal H}) \setminus \{z\}$;  
\item [(iii)] $\mathcal{H} \setminus \{z\}$ has a fast winning strategy;
\item [(iv)] For every two edges $e, e' \in \mathcal{H}$, if $\phi : V(\mathcal{H} \setminus \{e, e'\}) \longrightarrow V(\mathcal{H})$ is a monomorphism, then $\phi$ is the identity;
\item [(v)] $e \cap r \neq \emptyset$ and $e \cap g \neq \emptyset$ holds for every edge $e \in \mathcal{H}$, where $r$ and $g$ are the two edges incident with $z$ in $\mathcal{H}$. 
\item [(vi)] $|V(\mathcal{H}) \setminus (r \cup g)| < k-1$. 
\end{description}
Then $\mathcal{R}^{(k)}(\mathcal{H}, \aleph_0)$ is a draw.
\end{theorem}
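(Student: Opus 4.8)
The plan is to exhibit a strategy for SP that prevents FP from ever completing a copy of $\mathcal{H}$. Since the strategy stealing argument recalled in the introduction already shows that FP is guaranteed at least a draw (equivalently, SP can never force a win) in $\mathcal{R}^{(k)}(\mathcal{H},\aleph_0)$, it suffices to show that SP can also guarantee at least a draw. The whole point is that the board is infinite: SP will win a perpetual race of \emph{threats}, something impossible on a finite board, where threats must eventually be exhausted.

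The heart of the strategy combines the degree-$2$ vertex $z$ from (i) with the fast winning strategy from (iii). First I would have SP build a copy $K$ of $\mathcal{H}\setminus\{z\}$ as fast as possible using (iii); the fact that this costs only $|E(\mathcal{H}\setminus\{z\})|$ moves is what prevents FP from completing his own (strictly longer) copy of $\mathcal{H}$ during the opening phase. Once $K$ is in place, SP threatens \emph{forever} via an apex-swapping device. Let $r,g$ be the two edges of $\mathcal{H}$ at $z$, and for each fresh vertex $w$ not yet touched in the game set $r_w := (r\setminus\{z\})\cup\{w\}$ and $g_w := (g\setminus\{z\})\cup\{w\}$. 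Then $K\cup\{r_w,g_w\}$ is a copy of $\mathcal{H}$ with $w$ playing the role of $z$, so after claiming $r_w$ SP is one move — claiming $g_w$ — from winning; since $w$ is fresh, $g_w$ is unclaimed, so this is a genuine threat that FP must answer by claiming $g_w$. As there are infinitely many choices of $w$, each with its own distinct completing edge $g_w$, SP can issue such threats indefinitely, spending a single move per threat while forcing FP to spend a move on each block. Consequently, after the opening phase FP never gets a free move and can never assemble a copy of $\mathcal{H}$; meanwhile FP answers every threat, so SP never completes $\mathcal{H}$ either, and the game is a draw.

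Making this rigorous needs a structural input, and this is where the remaining hypotheses enter: FP must never be able to create a \emph{double threat}, i.e.\ a single move producing two near-complete copies $C_1,C_2$ of $\mathcal{H}$ with distinct unclaimed missing edges $m_1\ne m_2$. If such $C_1,C_2$ existed, then since $m_1\notin C_2$ and $m_2\notin C_1$ the two copies would be forced to agree outside their two distinguishing edges, yielding an embedding of $\mathcal{H}$ with two edges removed into $\mathcal{H}$ that is not the identity, contradicting the rigidity hypothesis (iv). Here (v) and (vi) are used to pin down the location of $r$ and $g$ inside every copy — every edge meets both $r$ and $g$, and all but fewer than $k-1$ vertices lie in $r\cup g$ — which forces the two copies to overlap in almost all of their vertices and edges, precisely so that (iv) can bite; the degree conditions (ii), in particular that $z$ is the unique vertex of degree $2$ while every other vertex has degree at least $4$, make the apex canonically identifiable and leave no low-degree vertex offering an alternative embedding. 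With double threats excluded, whenever FP does manage to threaten, SP can neutralize it with one blocking move, and can interleave these blocks with his own apex-swap threats without falling behind.

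The main obstacle, as I see it, is the bookkeeping of the \emph{tempo}: one must show that SP can simultaneously (a) finish the opening construction of $K$ even while FP may be raising threats that SP must block, (b) block every threat FP raises before SP has locked FP down, and (c) maintain his own stream of apex-swap threats, all without the various claimed edges interfering. The delicate cases are those in which FP tries to reuse SP's edges — the core $K$ or the sunflower of blocking edges $g_w$, which all share the $k-1$ vertices of $g\setminus\{z\}$ — to assemble a copy of his own, or tries to answer an SP threat with a move that doubles as progress toward FP's copy. Ruling these out is exactly what the combination of (ii), (iv), (v) and (vi) is for, since together they force any copy of $\mathcal{H}$ to be essentially determined by its central pair of edges and leave FP no way to convert a forced defensive move into offence. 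Once these interference and timing checks are carried out, the invariant ``at the start of every FP turn there is no copy of $\mathcal{H}$ missing a single unclaimed edge, while SP has a pending unanswered threat'' is maintained forever, so FP never completes $\mathcal{H}$ and the game is a draw.
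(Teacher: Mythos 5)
Your proposal reproduces the paper's high-level plan (SP builds $\mathcal{H}\setminus\{z\}$ fast via (iii), then issues perpetual ``apex-swap'' threats through fresh vertices, with (iv) excluding double threats), but the steps you defer are not routine bookkeeping --- they are the actual proof, and two of your central claims are false as stated. First, ``after the opening phase FP never gets a free move'' is wrong: FP moves first, so after SP's $m-2$ opening moves FP plays his $(m-1)$th move, which can already be a threat, and he can then chain threats indefinitely while SP is forced to block, never gaining the initiative. Handling this branch requires showing that at every such moment FP's threat is \emph{unique} (so one blocking edge suffices); this rests on a lemma you never formulate --- given a copy of $\mathcal{H}\setminus\{z\}$ and an apex vertex $x$, the completing pair $r',g'$ is unique (the paper's Lemma~\ref{lem::uniquerg}) --- together with a trichotomy on FP's $(m-1)$th move (standard threat, special threat, no threat) and separate inductions for each case; in particular the ``special threat'' case is eliminated by a rigidity argument, not by tempo. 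Second, your apex-swap device is incomplete: when SP claims $r_w$, FP's forced reply $g_w$ may itself create a threat for FP (the $k-1$ old vertices of $g_w$ sit inside structures FP controls), and then SP's next threat loses the race. The paper's Stage~III therefore contains a choice rule --- claim $r'$ if FP cannot threaten by answering with $g'$, otherwise claim $g'$ --- together with a proof that one of the two choices is always safe; that proof is exactly where (v), (vi) and the requirement (absent from your opening) that SP's copy of $\mathcal{H}\setminus\{z\}$ be vertex-disjoint from FP's first edge $e_1$ are used, via the counting $k-1\le |e_1\cap V(\mathcal{F}^r)| \le |V(\mathcal{F}_2)\setminus V(\mathcal{F}^r)| < k-1$.

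Your rigidity sketch for excluding double threats also skips the confinement step. Two near-copies of $\mathcal{H}$ in FP's graph do not automatically ``agree outside their two distinguishing edges'': one must first show that every copy of $\mathcal{H}\setminus\{z\}$ in FP's graph avoids all the edges FP was forced to claim as blocks (each such edge contains a vertex of degree $1$, so property (ii) applies), which confines both copies to FP's first $m-1$ edges, and only then does a counting argument force them to coincide so that (iv) and the uniqueness lemma yield a contradiction. Finally, your closing invariant (``SP has a pending unanswered threat at the start of every FP turn'') cannot hold in the branch where FP chains threats --- there SP is purely defending --- so it cannot serve as the global invariant; the paper instead proves, by induction in each of the three cases, the weaker but correct invariant that FP never has a threat after the relevant move. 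In short, the skeleton matches the paper, but the proposal leaves open precisely the lemmas that make it work.
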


Before proving this theorem, we will introduce some more notation and terminology which will be used throughout this section. Let $e \in \mathcal{H}$ be an arbitrary edge, let $\mathcal{F}$ be a copy of $\mathcal{H} \setminus \{e\}$ in $K_{\mathbb{N}}^k$ and let $e' \in K_{\mathbb{N}}^k$ be an edge such that $\mathcal{F} \cup \{e'\} \cong \mathcal{H}$. If $e'$ is free, then it is said to be a \emph{threat} and $\mathcal{F}$ is said to be \emph{open}. If $\mathcal{F}$ is not open, then it is said to be \emph{closed}. Moreover, $e'$ is called a \emph{standard threat} if it is a threat and $e \in \{r,g\}$. Similarly, $e'$ is called a \emph{special threat} if it is a threat and $e \notin \{r,g\}$.     

Next, we state and prove two simple technical lemmata.

\begin{lemma} \label{lem::missing1edge}
Let $\mathcal{H}$ be a $k$-graph which satisfies Properties (i), (ii) and (iv) from Theorem~\ref{th::HypergraphProperties}. Then, for every edge $e \in \mathcal{H}$, if $\phi : V(\mathcal{H} \setminus \{e\}) \longrightarrow V(\mathcal{H})$ is a monomorphism, then $\phi$ is the identity. 
\end{lemma}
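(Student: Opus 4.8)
The plan is to reduce the single-edge statement to the two-edge statement of Property~(iv) by restricting the monomorphism to the vertex set obtained after deleting a carefully chosen second edge. The first thing to pin down is the domain of $\phi$. By Properties~(i) and~(ii) the minimum degree of $\mathcal{H}$ is exactly $2$ (attained only at $z$), so deleting a single edge $e$ cannot drop any degree below $1$; in particular no vertex becomes isolated and hence $V(\mathcal{H} \setminus \{e\}) = V(\mathcal{H})$. Thus $\phi$ is a monomorphism defined on the whole vertex set $V(\mathcal{H})$, and it suffices to produce one edge $f \neq e$ for which the restriction of $\phi$ to $V(\mathcal{H} \setminus \{e, f\})$ both coincides with $\phi$ and falls under the hypothesis of Property~(iv).

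Next I would determine which vertices a double deletion can remove. A vertex $v$ is isolated in $\mathcal{H} \setminus \{e, f\}$ precisely when every edge through $v$ lies in $\{e, f\}$. For $v \neq z$ this is impossible, since $d_{\mathcal{H}}(v) \geq 4$ by Property~(ii); for $v = z$ it happens only when $\{e, f\}$ is exactly the pair $\{r, g\}$ of edges incident with $z$. Consequently, as long as I choose $f$ with $\{e, f\} \neq \{r, g\}$, no vertex is lost and $V(\mathcal{H} \setminus \{e, f\}) = V(\mathcal{H}) = V(\mathcal{H} \setminus \{e\})$. Moreover, the restriction of $\phi$ to this set is still injective and still sends every edge of $\mathcal{H} \setminus \{e, f\}$ (a subset of the edges of $\mathcal{H} \setminus \{e\}$) to an edge of $\mathcal{H}$, so it is a genuine monomorphism of $\mathcal{H} \setminus \{e, f\}$ into $\mathcal{H}$.

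It remains to check that an admissible $f$ exists. The only choices I must avoid are $f = e$ and, in case $e \in \{r, g\}$, the unique partner of $e$ in $\{r, g\}$; that is at most two forbidden edges. Since $z$ has a neighbour $u \neq z$ with $d_{\mathcal{H}}(u) \geq 4$, at least two edges other than $r$ and $g$ pass through $u$, so $|E(\mathcal{H})| \geq 4$ and an admissible $f$ is always available. Fixing such an $f$ and applying Property~(iv) to the pair $\{e, f\}$ shows that $\phi = \phi|_{V(\mathcal{H} \setminus \{e, f\})}$ is the identity, as required. The only real subtlety is the bookkeeping of the vertex set of an edge-deleted hypergraph: the argument works precisely because the degree hypotheses of Property~(ii) guarantee that a single extra deletion cannot shrink the vertex set unless it targets the two edges at $z$, and that one case can always be sidestepped.
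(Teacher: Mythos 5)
Your proof is correct and takes essentially the same route as the paper's: both reduce to Property (iv) by choosing a second edge $f$ with $f \neq e$ and $\{e,f\} \neq \{r,g\}$, so that the degree conditions in Properties (i) and (ii) guarantee $V(\mathcal{H} \setminus \{e,f\}) = V(\mathcal{H})$ and hence $\phi$ coincides with its restriction to $V(\mathcal{H} \setminus \{e,f\})$, which is the identity. Your write-up merely spells out the bookkeeping (why no vertex is lost, and why an admissible $f$ exists) that the paper leaves implicit.
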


\begin{proof}  
Fix an arbitrary edge $e \in \mathcal{H}$ and an arbitrary monomorphism $\phi : V(\mathcal{H} \setminus \{e\}) \longrightarrow V(\mathcal{H})$. It follows by Properties (i) and (ii) that there exists an edge $f \in \mathcal{H} \setminus \{e\}$ such that $V(\mathcal{H} \setminus \{e, f\}) = V(\mathcal{H})$. Hence, $\phi$ equals its restriction to $V(\mathcal{H} \setminus \{e, f\})$ which is the identity by Property (iv).       
\end{proof} 

\begin{lemma} \label{lem::uniquerg} 
Let $\mathcal{H}$ be a $k$-graph which satisfies Properties (i) and (iv) from Theorem~\ref{th::HypergraphProperties}. For any given copy $\mathcal{H}'$ of $\mathcal{H} \setminus \{z\}$ in $K^k_{\mathbb{N}}$ and any vertex $x \in V(K^k_{\mathbb{N}}) \setminus V(\mathcal{H}')$, there exists a unique pair of edges $r', g' \in K^k_{\mathbb{N}}$ such that $x \in r' \cap g'$ and $\mathcal{H}' \cup \{r', g'\} \cong \mathcal{H}$.
\end{lemma}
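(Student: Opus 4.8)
For existence I would argue constructively. Fix an isomorphism $\psi : \mathcal{H}\setminus\{z\} \to \mathcal{H}'$. Since $z$ has degree $2$ by Property (i), its incident edges are exactly $r$ and $g$, so $r\setminus\{z\}$ and $g\setminus\{z\}$ are $(k-1)$-subsets of $V(\mathcal{H})\setminus\{z\}$. Set $r' := \psi(r\setminus\{z\})\cup\{x\}$ and $g' := \psi(g\setminus\{z\})\cup\{x\}$, and extend $\psi$ to a map $\alpha$ on $V(\mathcal{H})$ by $\alpha(z):=x$. As $x \notin V(\mathcal{H}')$, the map $\alpha$ is injective, and it carries $E(\mathcal{H}) = (E(\mathcal{H})\setminus\{r,g\})\cup\{r,g\}$ onto $E(\mathcal{H}')\cup\{r',g'\}$; hence $\alpha$ witnesses $\mathcal{H}'\cup\{r',g'\}\cong\mathcal{H}$ with $x\in r'\cap g'$, giving a pair with the required properties.

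For uniqueness, suppose $r'', g''$ is another pair with $x \in r''\cap g''$ and $\mathcal{H}'\cup\{r'',g''\}\cong\mathcal{H}$, and fix an isomorphism $\sigma : \mathcal{H}\to\mathcal{G}$, where $\mathcal{G}:=\mathcal{H}'\cup\{r'',g''\}$. The plan is to show $\sigma=\alpha$, which immediately forces $\{r'',g''\}=\{\sigma(r),\sigma(g)\}=\{\alpha(r),\alpha(g)\}=\{r',g'\}$. To compare $\sigma$ with $\psi$, consider $\eta := \sigma^{-1}\circ\psi$. Since $\psi$ maps $\mathcal{H}\setminus\{z\}$ isomorphically onto $\mathcal{H}'\subseteq\mathcal{G}$ and $\sigma^{-1}$ maps $\mathcal{G}$ isomorphically onto $\mathcal{H}$, the composition $\eta$ is a monomorphism of $\mathcal{H}\setminus\{z\}$ into $\mathcal{H}$ with domain $V(\mathcal{H})\setminus\{z\}$.

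The crucial point is that $\mathcal{H}\setminus\{z\}$ coincides with $\mathcal{H}\setminus\{r,g\}$: as $z$ has degree $2$, deleting the edges $r$ and $g$ strands exactly $z$, so $V(\mathcal{H}\setminus\{r,g\}) = V(\mathcal{H})\setminus\{z\}$ and the two $k$-graphs share the same edge set $E(\mathcal{H})\setminus\{r,g\}$. Thus $\eta$ is a monomorphism $V(\mathcal{H}\setminus\{r,g\})\to V(\mathcal{H})$, and Property (iv), applied with the edge pair $r,g$, forces $\eta$ to be the identity. Consequently $\sigma(v)=\psi(v)$ for every $v\in V(\mathcal{H})\setminus\{z\}$. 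Since $\sigma$ is a bijection of $V(\mathcal{H})$ onto $V(\mathcal{G})=V(\mathcal{H}')\cup\{x\}$ whose restriction to $V(\mathcal{H})\setminus\{z\}$ already bijects onto $V(\mathcal{H}')$, the last vertex $z$ must satisfy $\sigma(z)=x=\alpha(z)$, so $\sigma=\alpha$. As $x\notin V(\mathcal{H}')$, the only edges of $\mathcal{G}$ through $x=\sigma(z)$ are $r''$ and $g''$, whence $\{r'',g''\}=\{\sigma(r),\sigma(g)\}=\{r',g'\}$, as desired.

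The main obstacle is the uniqueness half, and inside it the identification $\mathcal{H}\setminus\{z\}=\mathcal{H}\setminus\{r,g\}$, which is exactly what allows Property (iv) to pin the composition $\eta=\sigma^{-1}\circ\psi$ down to the identity; everything after that is bookkeeping about which edges pass through the image of the degree-$2$ vertex. In writing the proof I would be careful to check that $\eta$ genuinely lands in $V(\mathcal{H})$ and sends edges of $\mathcal{H}\setminus\{z\}$ to edges of $\mathcal{H}$, so that the word \emph{monomorphism} in Property (iv) applies verbatim, and that the domain of $\eta$ and the domain $V(\mathcal{H}\setminus\{r,g\})$ appearing in Property (iv) coincide exactly, i.e.\ that no vertex other than $z$ becomes isolated when $r$ and $g$ are removed.
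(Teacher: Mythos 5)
Your proof is correct and is essentially the paper's argument: both hinge on applying Property (iv) to $\mathcal{H}\setminus\{r,g\}$, whose vertex set is $V(\mathcal{H})\setminus\{z\}$, to force any isomorphism between the two completions $\mathcal{H}'\cup\{r',g'\}$ and $\mathcal{H}'\cup\{r'',g''\}$ to fix $V(\mathcal{H}')$ pointwise, after which the remaining vertex must map to $x$ and hence the two edge pairs must coincide. The differences are only presentational: you make the existence construction and the identifications with $\mathcal{H}$ (via $\psi$ and $\sigma$) explicit and argue uniqueness directly, whereas the paper argues by contradiction and applies Property (iv) to the restriction of an isomorphism between the two copies in $K^k_{\mathbb{N}}$.
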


\begin{proof}
Let $\mathcal{H}'$ be an arbitrary copy of $\mathcal{H} \setminus \{z\}$ in $K^k_{\mathbb{N}}$ and let $x \in V(K^k_{\mathbb{N}}) \setminus V(\mathcal{H}')$ be an arbitrary vertex. It is immediate from the definition of $\mathcal{H}'$ and Property (i) that there are edges $r', g' \in E(K^k_{\mathbb{N}})$ such that $x \in r' \cap g'$ and $\mathcal{H}' \cup \{r', g'\} \cong \mathcal{H}$. Suppose for a contradiction that there are edges $r'', g'' \in E(K^k_{\mathbb{N}})$ such that $\{r'', g''\} \neq \{r', g'\}$, $x \in r'' \cap g''$ and $\mathcal{H}' \cup \{r'', g''\} \cong \mathcal{H}$. Let $\phi : V(\mathcal{H}' \cup \{r',g'\}) \rightarrow V(\mathcal{H}' \cup \{r'', g''\})$ be an arbitrary isomorphism. The restriction of $\phi$ to $V(\mathcal{H}')$ is clearly a monomorphism and is thus the identity by Property (iv). Since $x$ is the only vertex in $(r' \cap g') \setminus V(\mathcal{H}')$ and in $(r'' \cap g'') \setminus V(\mathcal{H}')$, it follows that $\phi$ itself is the identity and thus $\{r',g'\} = \{r'',g''\}$ contrary to our assumption.     
\end{proof}

We are now in a position to prove the main result of this section.

\noindent \emph{Proof of Theorem~\ref{th::HypergraphProperties}}.
Let $\mathcal{H}$ be a $k$-graph which satisfies the conditions of the theorem and let $m = |E(\mathcal{H})|$. At any point during the game, let $\mathcal{G}_1$ denote FP's current graph and let $\mathcal{G}_2$ denote SP's current graph. We will describe a drawing strategy for SP. We begin by a brief description of its main ideas and then detail SP's moves in each case. The strategy is divided into three stages. In the first stage SP quickly builds a copy of $\mathcal{H} \setminus \{z\}$, in the second stage SP defends against FP's threats, and in the third stage (which we might never reach) SP makes his own threats.

\bigskip

\noindent \textbf{Stage I:} Let $e_1$ denote the edge claimed by FP in his first move. In his first $m-2$ moves, SP builds a copy of $\mathcal{H} \setminus \{z\}$ which is vertex-disjoint from $e_1$. SP then proceeds to Stage II.     

\medskip

\noindent \textbf{Stage II:} Immediately before each of SP's moves in this stage, he checks whether there are a subgraph $\mathcal{F}_1$ of $\mathcal{G}_1$ and a free edge $e' \in K^k_{\mathbb{N}}$ such that $\mathcal{F}_1 \cup \{e'\} \cong \mathcal{H}$. If such $\mathcal{F}_1$ and $e'$ exist, then SP claims $e'$ (we will show later that, if such $\mathcal{F}_1$ and $e'$ exist, then they are unique). Otherwise, SP proceeds to Stage III.

\medskip

\noindent \textbf{Stage III:} Let $\mathcal{F}_2$ be a copy of $\mathcal{H} \setminus \{z\}$ in $\mathcal{G}_2$ and let $z'$ be an arbitrary vertex of $K^k_{\mathbb{N}} \setminus (\mathcal{G}_1 \cup \mathcal{G}_2)$. Let $r', g' \in K^k_{\mathbb{N}}$ be free edges such that $z' \in r' \cap g'$ and $\mathcal{F}_2 \cup \{r', g'\} \cong \mathcal{H}$. If, once SP claims $r'$, FP cannot make a threat by claiming $g'$, then SP claims $r'$. Otherwise he claims $g'$.  	

\bigskip

It readily follows by Property (iii) that SP can play according to Stage I of the strategy (since $K^k_{\mathbb{N}}$ is infinite, it is evident that SP's graph can be made disjoint from $e_1$). It is obvious from its description that SP can play according to Stage II of the strategy. Finally, since SP builds a copy of $\mathcal{H} \setminus \{z\}$ in Stage I and since $K^k_{\mathbb{N}}$ is infinite, it follows that SP can play according to Stage III of the strategy as well.    

It thus remains to prove that the proposed strategy ensures at least a draw for SP. Since, trivially, FP cannot win the game in less than $m$ moves, this will readily follow from the next three lemmata which correspond to three different options for FP's $(m-1)$th move.

\begin{lemma} \label{lem::noThreat}
If FP's $(m-1)$th move is not a threat, then he cannot win the game. 
\end{lemma}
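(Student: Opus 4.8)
The plan is to analyze what FP's graph $\mathcal{G}_1$ can look like at the moment of his $(m-1)$th move, under the assumption that this move is not a threat, and to show that SP's strategy prevents FP from ever completing a copy of $\mathcal{H}$. First I would recall the key structural fact coming from SP's play: during Stage II, whenever FP creates a threat (i.e.\ a subgraph $\mathcal{F}_1 \subseteq \mathcal{G}_1$ together with a free edge $e'$ with $\mathcal{F}_1 \cup \{e'\} \cong \mathcal{H}$), SP immediately claims the unique threatening edge $e'$, thereby closing it. So the first thing to pin down is the promised uniqueness: I would use Lemma~\ref{lem::missing1edge} to argue that if $\mathcal{F}_1 \cup \{e'\} \cong \mathcal{H}$ and also $\mathcal{F}_1 \cup \{e''\} \cong \mathcal{H}$ for free edges $e', e''$, then the induced monomorphism on $V(\mathcal{H} \setminus \{e\})$ must be the identity, forcing $e' = e''$; likewise that at most one subgraph $\mathcal{F}_1$ of $\mathcal{G}_1$ can give rise to a threat at any given time. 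This justifies the parenthetical claim in the strategy description and ensures SP is never forced to respond to two simultaneous threats.

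With uniqueness in hand, the core of the argument is to count and locate FP's edges. Since FP has made exactly $m-1$ moves and cannot win in fewer than $m$, the only way FP could win is by completing a copy of $\mathcal{H}$ on his next move; this requires that immediately after his $(m-1)$th move $\mathcal{G}_1$ already contains a copy of $\mathcal{H} \setminus \{f\}$ for some edge $f$, with the missing edge free. The hypothesis is precisely that this $(m-1)$th move is \emph{not} a threat, i.e.\ it does not itself create such a configuration. The main step is therefore to show that every copy of $\mathcal{H} \setminus \{f\}$ inside FP's $m-1$ edges must have been completed at an \emph{earlier} move, so that the corresponding threat was already present before FP's $(m-1)$th move and was closed by SP. Here I would track the invariant maintained throughout Stage II: at the start of each of SP's moves, \emph{every} threat currently present in $\mathcal{G}_1$ gets closed, so no open threat of FP survives into the next round. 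Consequently, after FP's $(m-1)$th move, any copy of $\mathcal{H} \setminus \{f\}$ in $\mathcal{G}_1$ that was \emph{not} newly created by that move has its completing edge already owned by SP, hence is closed; and the one copy (if any) completed by the $(m-1)$th move is, by assumption, not a threat, so its completing edge is not free — it is owned by SP or by FP.

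The delicate part, and what I expect to be the main obstacle, is ruling out the scenario where FP's $(m-1)$th move is simultaneously the final edge of several distinct almost-copies of $\mathcal{H}$, or where a single edge-addition creates a copy of $\mathcal{H} \setminus \{f\}$ that shares structure with a previously closed copy in a way that lets FP reuse SP's blocking edge's vertices. To handle this I would invoke Property~(iv) (and its consequence, Lemma~\ref{lem::missing1edge}) to control the rigidity of $\mathcal{H}$: because any monomorphism from $V(\mathcal{H} \setminus \{e, e'\})$ into $V(\mathcal{H})$ is the identity, two different near-copies of $\mathcal{H}$ sitting inside FP's graph cannot be glued together into a single winning copy without forcing a forbidden non-trivial automorphism. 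I would then argue that if FP could complete $\mathcal{H}$ on move $m$, the completing edge would have to be free, contradicting either the assumption (if the relevant copy was finished on move $m-1$) or the fact that SP already closed it (if it was finished earlier).

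Finally I would assemble these observations into the conclusion: since every copy of $\mathcal{H} \setminus \{f\}$ in $\mathcal{G}_1$ after FP's $(m-1)$th move is closed, there is no free edge whose addition completes a copy of $\mathcal{H}$ for FP, so FP cannot win on move $m$. As FP also could not have won earlier (he needs $m$ edges and has made only $m-1$ moves, with all intermediate threats neutralised), FP never wins, which is exactly the assertion of Lemma~\ref{lem::noThreat}.
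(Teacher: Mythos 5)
Your proposal misses the mechanism that actually drives the paper's proof of this lemma, and as written it has a genuine logical gap. Under the hypothesis that FP's $(m-1)$th move is not a threat, SP's strategy does \emph{not} remain in Stage II blocking mode: finding no threat to block, SP proceeds to Stage III and starts making threats of his own, at a fresh vertex $z'$ inside his private copy $\mathcal{F}_2$ of $\mathcal{H} \setminus \{z\}$ built in Stage I. The paper's proof is an induction over \emph{all} moves $i \geq m-1$ showing that (a) FP never has a threat and (b) every edge FP claims after move $m-1$ is a forced response to one of SP's threats, hence contains a vertex of degree $1$ in $\mathcal{G}_1$. The forcing in (b) comes from the standing assumption that SP does not win (if FP ignored SP's threat, SP would complete $\mathcal{H}$ on his next move), and (a) is proved by showing that if FP's forced response were a threat, then --- using Property (ii) and (b) to confine the relevant copies $\mathcal{F}^r, \mathcal{F}^g$ of $\mathcal{H}\setminus\{z\}$ to $\mathcal{G}'_1$, using Property (v) together with the disjointness of SP's construction from $e_1$ to rule out $e_1$ lying in these copies (whence $\mathcal{F}^r = \mathcal{G}'_1 \setminus \{e_1\} = \mathcal{F}^g$ by edge counting), and finally applying Lemma~\ref{lem::uniquerg} at the vertex $z'$ --- the two completing pairs of edges would have to coincide with $\{r', g'\}$, contradicting the freeness of $r''$ and $g''$. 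None of this appears in your proposal: you never mention Stage III, SP's counter-threats, or the forced nature of FP's moves from move $m$ onward.

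As a consequence, your concluding step is a non sequitur. You argue (at best) that FP cannot win on move $m$, and then assert that FP never wins. But the game continues indefinitely, and the lemma must exclude FP completing a copy of $\mathcal{H}$ at \emph{any} later move; nothing in your argument constrains what FP's graph looks like after moves $m, m+1, m+2, \ldots$. The invariant you propose --- ``at the start of each of SP's moves, every threat currently present in $\mathcal{G}_1$ gets closed'' --- cannot be guaranteed by fiat: SP claims one edge per move, so he can close at most one threat per round, and in Stage III he is not closing FP's threats at all but creating his own. That invariant holds in the paper only because the induction proves FP never creates any threat after move $m-1$ in the first place, and that proof is exactly the uniqueness-plus-disjointness argument sketched above, which your proposal replaces with an appeal to ``rigidity'' via Property (iv) without identifying how rigidity is brought to bear on edges claimed arbitrarily far into the future.
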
 

\begin{proof}
Assume that SP does not win the game. We will prove that, under this assumption, not only does FP not win the game, but in fact he does not even make a single threat throughout the game. We will prove by induction on $i$ that the following two properties hold immediately after FP's $i$th move for every $i \geq m-1$. 
\begin{description}
\item [(a)] FP has no threat.
\item [(b)] Let ${\mathcal G}'_1$ denote FP's graph immediately after his $(m-1)$th move. Then ${\mathcal G}_1 \setminus {\mathcal G}'_1$ consists of $i-m+1$ edges $e_m, \ldots, e_i$, where, for every $m \leq j \leq i$, $e_j$ contains a vertex $z_j$ such that $d_{{\mathcal G}_1}(z_j) = 1$.  
\end{description}

Properties (a) and (b) hold for $i = m-1$ by assumption. Assume they hold for some $i \geq m-1$; we will prove they hold for $i+1$ as well. Since FP's $(m-1)$th move is not a threat, SP's $i$th move is played in Stage III. By the description of Stage III, in his $i$th move SP claims an edge $e' \in \{r', g'\}$, where both $r'$ and $g'$ contain a vertex $z'$ which is isolated in ${\mathcal G}_1$. If FP does not respond by claiming the unique edge of $\{r', g'\} \setminus \{e'\}$ in his $(i+1)$th move, then SP will claim it in his $(i+1)$th move and win the game contrary to our assumption (by Property (a), FP had no threat before SP's $i$th move and thus cannot complete a copy of $\mathcal{H}$ in one move). It follows that Property (b) holds immediately after FP's $(i+1)$th move. Suppose for a contradiction that Property (a) does not hold, i.e., that FP makes a threat in his $(i+1)$th move. As noted above, in his $i$th move, SP claims either $r'$ or $g'$ and, by our assumption that Property (a) does not hold immediately after FP's $(i+1)$th move, in either case FP's response is a threat. Hence, immediately after FP's $(i+1)$th move, there exist free edges $r''$ and $g''$ and copies ${\mathcal F}^r$ and ${\mathcal F}^g$ of ${\mathcal H} \setminus \{z\}$ in ${\mathcal G}_1$ such that ${\mathcal F}^r \cup \{r', g''\} \cong {\mathcal H}$ and ${\mathcal F}^g \cup \{r'', g'\} \cong {\mathcal H}$. By Property (ii) and since, by the induction hypothesis, Property (b) holds for $i$, we have ${\mathcal F}^r \subseteq {\mathcal G}'_1$ and ${\mathcal F}^g \subseteq {\mathcal G}'_1$. Suppose for a contradiction that $e_1 \in {\mathcal F}^r$. Since ${\mathcal F}^r \cup \{r'\}$ is a threat, with $z' \in r'$ in the role of $z$, it follows by Property (v) that $r' \cap e_1 \neq \emptyset$. However, SP could have created a threat by claiming $r'$ in his $i$th move which, by Stages I and III of SP's strategy, implies that $r' \cap e_1 = \emptyset$. Hence $e_1 \notin {\mathcal F}^r$ and an analogous argument shows that $e_1 \notin {\mathcal F}^g$. Since $|E({\mathcal G}'_1) \setminus \{e_1\}| = m-2$, it follows that ${\mathcal F}^r = \mathcal{G}'_1 \setminus \{e_1\} = {\mathcal F}^g$. Therefore, by Lemma~\ref{lem::uniquerg} we have $\{r', g''\} = \{r'', g'\}$. Since, clearly $r' \neq g'$, it follows that $\{r'', g''\} = \{r', g'\}$ contrary to our assumption that both $r''$ and $g''$ were free immediately before FP's $(i+1)$th move. We conclude that Property (a) holds immediately after FP's $(i+1)$th move as well.                \end{proof} 

\begin{lemma} \label{lem::specialThreat}
If FP's $(m-1)$th move is a special threat, then he cannot win the game. 
\end{lemma}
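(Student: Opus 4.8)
The plan is to show that once SP has neutralised the single special threat, FP is never able to threaten again, so that SP may safely revert to Stage III and win the resulting battle exactly as in Lemma~\ref{lem::noThreat}. Write $\mathcal{F}_1$ for FP's graph immediately after his $(m-1)$th move. Since this move is a special threat, $\mathcal{F}_1 \cong \mathcal{H} \setminus \{e\}$ for some $e \notin \{r,g\}$, with a free completing edge $e'$. As $z \notin e$, Properties (i) and (ii) guarantee that $\mathcal{H} \setminus \{e\}$ has no isolated vertex, so $V(\mathcal{F}_1) = V(\mathcal{H})$ and $e' \subseteq V(\mathcal{F}_1)$. Lemma~\ref{lem::missing1edge} then shows that $e'$ is the \emph{unique} edge completing $\mathcal{F}_1$ to a copy of $\mathcal{H}$, so once SP (being in Stage II) claims $e'$, the copy $\mathcal{F}_1$ is closed and FP has no threat.

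The heart of the matter is to show that FP's $m$th move cannot be a threat either. Suppose it were; then FP would own a copy $\mathcal{F}' \cong \mathcal{H} \setminus \{f\}$ using his new edge $e_m$, and a short degree count (using Properties (i) and (ii), since removing one edge from $\mathcal{F}_1$ isolates no vertex) forces $\mathcal{F}' = (\mathcal{F}_1 \setminus \{h\}) \cup \{e_m\}$ with $V(\mathcal{F}') = V(\mathcal{H})$ and $e_m \subseteq V(\mathcal{F}_1)$. Fixing an isomorphism $\gamma : \mathcal{F}_1 \to \mathcal{H} \setminus \{e\}$, the common part $\mathcal{F}_1 \cap \mathcal{F}' = \mathcal{F}_1 \setminus \{h\}$ is then a copy of $\mathcal{H}$ with two edges deleted and full vertex set, so composing the two embeddings yields a monomorphism $V(\mathcal{H} \setminus \{e, \gamma(h)\}) \to V(\mathcal{H})$ which, by Property (iv), is the identity. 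Tracing edge sets through this identity forces $e_m = \gamma^{-1}(e) = e'$ --- precisely the edge SP has just claimed, a contradiction. Hence FP has no threat after his $m$th move, and SP returns to Stage III.

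It then remains to exclude any later threat, and here I would argue as in Lemma~\ref{lem::noThreat}: from his $(m+1)$th move onwards FP is always forced to answer SP's Stage III threat, so every edge he adds beyond the frozen core $\mathcal{F}_1 \cup \{e_m\}$ contains a vertex of degree $1$ in FP's graph (the fresh vertex $z'$ supplied by Stage III). Since $\delta(\mathcal{H} \setminus \{z\}) \geq 3$ by Property (ii), no such defensive edge can lie in a subcopy of $\mathcal{H} \setminus \{z\}$, so any potential standard threat must draw its $\mathcal{H} \setminus \{z\}$-part from $\mathcal{F}_1 \cup \{e_m\}$, while any special near-copy inside this core reuses an already claimed edge exactly as above; double standard threats are then ruled out by Lemma~\ref{lem::uniquerg} together with Property (v) and the fact that SP's Stage III edges avoid $e_1$. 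The main obstacle will be the bookkeeping caused by the extra frozen edge $e_m$: in Lemma~\ref{lem::noThreat} the dangerous core had only $m-1$ edges, so an $\mathcal{H} \setminus \{z\}$-subcopy was pinned down by deleting a single edge, whereas here $\mathcal{F}_1 \cup \{e_m\}$ has $m$ edges and one must control the $1$- and $2$-edge swaps inside it. This is exactly where Properties (iv)--(vi) enter, forcing every such subcopy to be rigid and showing that $\mathcal{F}_1 \cup \{e_m\}$ contains no copy of $\mathcal{H} \setminus \{z\}$ beyond the one already closed by $e'$, so that FP can never reassemble a fresh threat.
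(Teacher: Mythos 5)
Your first two paragraphs are correct and are essentially the paper's own argument for the key step. The uniqueness of the blocking edge $e'$, the counting argument showing that a threat at move $m$ would have to be of the form $(\mathcal{F}_1 \setminus \{h\}) \cup \{e_m\}$ with $e_m \subseteq V(\mathcal{F}_1)$ (this is precisely where specialness enters: since $e \notin \{r,g\}$, deleting one edge of $\mathcal{F}_1$, or the pair $\{e,\gamma(h)\}$ from $\mathcal{H}$, isolates no vertex), and the Property (iv) rigidity forcing $e_m = \gamma^{-1}(e) = e'$, all go through; this matches the paper's proof that FP's $m$th move is not even a threat, merely phrased via composed embeddings instead of an isomorphism between the two copies of $\mathcal{H}$.

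The genuine gap is in your last paragraph, i.e., in every move after the $m$th. You correctly reduce the problem to controlling copies of $\mathcal{H} \setminus \{z\}$ inside the frozen core $\mathcal{F}_1 \cup \{e_m\}$ (defensive edges carry degree-$1$ vertices, and a double standard threat arising from a \emph{single} copy is killed by Lemma~\ref{lem::uniquerg} together with the fact that the partner edge is already claimed), and you correctly observe that the mechanism of Lemma~\ref{lem::noThreat} --- Property (v) plus disjointness of SP's Stage III edges from $e_1$ --- only removes $e_1$ from consideration and no longer pins the copy down, since the core now has $m$ edges. But at exactly this point you assert, with no argument, that Properties (iv)--(vi) ``force every such subcopy to be rigid'' and that the core ``contains no copy of $\mathcal{H} \setminus \{z\}$ beyond the one already closed by $e'$.'' This is both unproven and not the right statement: $e'$ closes $\mathcal{F}_1$, which is a copy of $\mathcal{H}$ minus an edge outside $\{r,g\}$ and neither is nor contains a copy of $\mathcal{H}\setminus\{z\}$; moreover, under Properties (i)--(vi) the core genuinely may contain one copy of $\mathcal{H}\setminus\{z\}$ (one passing through $e_m$), and rigidity for subgraphs missing three edges is not among the hypotheses. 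What must be shown --- the paper's Property (b) --- is that FP's graph contains \emph{at most one} such copy, and the paper's proof of this is a degree/counting argument, not a rigidity one: since FP's $(m-1)$th move was special, his core has a vertex $u$ of degree $2$ (at most $3$ after move $m$). Any copy of $\mathcal{H}\setminus\{z\}$ meeting $u$ or an edge through $u$ would need $d(u)\geq 3$ in the copy by Property (ii), hence would use all edges at $u$ and omit two edges of $\mathcal{F}_1$ not incident with $u$; but such a subgraph spans all $|V(\mathcal{H})|$ vertices of $V(\mathcal{F}_1)$, one more than $|V(\mathcal{H}\setminus\{z\})|$, a contradiction. So every copy avoids $u$ and its incident edges and therefore lies inside a set of only $m-2$ core edges, whence there is at most one. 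Without this step (or a worked-out substitute) your induction does not close, and the proof is incomplete.
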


\begin{proof}
Assume that SP does not win the game. We will prove that, under this assumption, FP does not win the game. We begin by showing that he does not win the game in his $m$th move. Let $e'$ be a free edge such that $\mathcal{G}_1 \cup \{e'\} \cong \mathcal{H}$. Playing according to the proposed strategy, SP responds to this threat by claiming $e'$. Let $f'$ denote the edge FP claims in his $m$th move. Suppose for a contradiction that, by claiming $f'$, FP completes a copy of $\mathcal{H}$. Note that $(\mathcal{G}_1 \setminus \{f'\}) \cup \{e'\} \cong \mathcal{H}$ and so there exists an isomorphism $\phi : V((\mathcal{G}_1 \setminus \{f'\}) \cup \{e'\}) \rightarrow V(\mathcal{G}_1)$. The restriction of $\phi$ to $V(\mathcal{G}_1 \setminus \{f'\})$ is clearly a monomorphism and is thus the identity by Lemma~\ref{lem::missing1edge}. However, $V((\mathcal{G}_1 \setminus \{f'\}) \cup \{e'\}) = V(\mathcal{G}_1 \setminus \{f'\})$ and so $\phi$ itself is the identity. It follows that $e' \in \mathcal{G}_1$ and thus $e' \in \mathcal{G}_1 \cap \mathcal{G}_2$ which is clearly a contradiction. We conclude that indeed FP does not win the game in his $m$th move. Next, we prove that, in his $m$th move, FP does not even make a threat. Suppose for a contradiction that by claiming $f'$ in his $m$th move, FP does create a threat. Immediately after FP's $m$th move, let $f'' \in {\mathcal G}_1$ and $f''' \in K_{\mathbb{N}}^k \setminus ({\mathcal G}_1 \cup {\mathcal G}_2)$ be edges such that ${\mathcal H}' := ({\mathcal G}_1 \setminus \{f''\}) \cup \{f'''\} \cong {\mathcal H}$. Recall that ${\mathcal H}'' := ({\mathcal G}_1 \setminus \{f'\}) \cup \{e'\} \cong {\mathcal H}$ as well. Let $\phi : V({\mathcal H}'') \rightarrow V(\mathcal{H}')$ be an isomorphism. The restriction of $\phi$ to $V({\mathcal H}'' \setminus \{e', f''\})$ is clearly a monomorphism and is thus the identity by Property (iv). Since FP's $(m-1)$th move was a special threat, it follows that $V({\mathcal H}'' \setminus \{e', f''\}) = V({\mathcal H}'')$ and thus $\phi$ itself is the identity. Therefore $e' \in {\mathcal H}'$. Since $e' \neq f'''$ we then have $e' \in \mathcal{G}_1$ and thus $e' \in \mathcal{G}_1 \cap \mathcal{G}_2$ which is clearly a contradiction. We conclude that indeed FP does not make a threat in his $m$th move.  

It remains to prove that FP cannot win the game in his $i$th move for any $i \geq m+1$. We will prove by induction on $i$ that the following two properties hold immediately after FP's $i$th move for every $i \geq m$. 
\begin{description}
\item [(a)] FP has no threat.
\item [(b)] ${\mathcal G}_1$ contains at most one copy of ${\mathcal H} \setminus \{z\}$.  
\end{description}

Starting with the induction basis $i = m$, note that Property (a) holds by the paragraph above. Moreover, since FP's $(m-1)$th move is a special threat, immediately after this move, there exists a vertex $u$ of degree two in ${\mathcal G}_1$. By Property (ii), this vertex and the two edges incident with it cannot be a part of any copy of ${\mathcal H} \setminus \{z\}$ in ${\mathcal G}_1$ immediately after FP's $m$th move. Property (b) now follows since FP's graph contains only $m-2$ additional edges. Assume Properties (a) and (b) hold immediately after FP's $i$th move for some $i \geq m$; we will prove they hold after his $(i+1)$th move as well. As in the proof of Lemma~\ref{lem::noThreat}, we can assume that in his $(i+1)$th move FP claims either $r'$ or $g'$. Since both edges contain a vertex which was isolated in ${\mathcal G}_1$ immediately before FP's $(i+1)$th move, neither edge can be a part of a copy of ${\mathcal H} \setminus \{z\}$ in ${\mathcal G}_1$. Hence, Property (b) still holds. As in the proof of Lemma~\ref{lem::noThreat}, if FP does make a threat in his $(i+1)$th move, then ${\mathcal G}_1$ must contain two copies ${\mathcal F}^r \neq {\mathcal F}^g$ of ${\mathcal H} \setminus \{z\}$ contrary to Property (b). We conclude that Property (a) holds as well.            
\end{proof}

\begin{lemma} \label{lem::standardThreat}
If FP's $(m-1)$th move is a standard threat, then he cannot win the game.  
\end{lemma}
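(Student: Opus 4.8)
The plan is to show that FP never creates a \emph{double threat}, i.e.\ that immediately after each of FP's moves (from his $(m-1)$th onward) his graph $\mathcal{G}_1$ admits at most one free edge completing a copy of $\mathcal{H}$. This suffices: if FP completed a copy of $\mathcal{H}$ on some move $i$ by claiming an edge $c$, then just before FP's $i$th move his graph already contained a copy of $\mathcal{H}$ missing only the free edge $c$, so $c$ was a threat immediately after FP's $(i-1)$th move; had it been FP's \emph{only} threat at that point, SP would have claimed $c$ in Stage II before FP's $i$th move, a contradiction. Thus a win for FP forces a double threat after some earlier move, and ruling these out proves the lemma. The hypothesis already gives the base of the induction: a standard threat exhibits $\mathcal{G}_1 \cong \mathcal{H}\setminus\{r\}$, in which the image $w$ of $z$ has degree $1$; by Lemma~\ref{lem::uniquerg} its unique completion is the corresponding image of $r$, and since $\mathcal{H}\setminus\{r\}$ contains a degree-$1$ vertex while $\mathcal{H}\setminus\{e\}$ with $e\notin\{r,g\}$ does not (Property (ii)), the $(m-1)$th move is a single threat.

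For the inductive step I would argue as in Lemma~\ref{lem::noThreat}: after SP's reply it is again FP's turn with no live threat, so if claiming an edge $f$ produced two threats, witnessed by $\mathcal{A}\cup\{p\}\cong\mathcal{H}$ and $\mathcal{B}\cup\{q\}\cong\mathcal{H}$ with $p\neq q$, then both copies $\mathcal{A},\mathcal{B}\subseteq\mathcal{G}_1$ must contain $f$ (otherwise the corresponding threat predates FP's move). I would also carry the auxiliary invariant that $\mathcal{G}_1$ contains a \emph{unique} copy $\mathcal{C}$ of $\mathcal{H}\setminus\{z\}$; this holds at the base, since by the degree-$1$ argument above together with Property (ii) the only such copy in $\mathcal{H}\setminus\{r\}$ is $\mathcal{C}_0:=\mathcal{G}_1\setminus\{g_0\}$, where $g_0$ is the edge through $w$. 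If $f$ is a \emph{forced} reply to one of SP's Stage III threats, then $f$ passes through a vertex that was isolated in $\mathcal{G}_1$ before the move, so by Property (ii) it lies in no copy of $\mathcal{H}\setminus\{z\}$; this preserves the uniqueness of $\mathcal{C}$ and, precisely as in Lemma~\ref{lem::noThreat}, prevents $f$ from creating any new threat at all.

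The crux, and the step I expect to be most delicate, is the case in which $f$ is a free move. Here I must rule out that a single edge $f$ simultaneously completes two distinct near-copies of $\mathcal{H}$ — in particular that a fresh standard threat coincides with a special threat, since two standard threats on the common core $\mathcal{C}$ are impossible: by Lemma~\ref{lem::uniquerg} they would be determined by two distinct apices and hence created on different moves, and SP blocks the first before the second could arise. Writing $\mathcal{A}^{+}=\mathcal{A}\cup\{p\}$ and $\mathcal{B}^{+}=\mathcal{B}\cup\{q\}$, these are two copies of $\mathcal{H}$ sharing the edge $f$ and agreeing on $\mathcal{A}\cap\mathcal{B}$. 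The rigidity of Property (iv), packaged through Lemmas~\ref{lem::missing1edge} and~\ref{lem::uniquerg}, should force the cores of $\mathcal{A}^{+}$ and $\mathcal{B}^{+}$ to be identified with $\mathcal{C}$, and it is here that Property (vi) — bounding the number of vertices of $\mathcal{H}$ outside $r\cup g$ — is the new ingredient needed to guarantee that two copies overlapping so heavily cannot be genuinely distinct. Finally, any completion whose core uses FP's first edge $e_1$ is excluded exactly as in Lemma~\ref{lem::noThreat}: by Property (v) the associated apex edge would have to meet $e_1$, whereas SP built his Stage I core, and hence all the Stage III edges that FP is forced to claim, disjointly from $e_1$. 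Combining these constraints should force $p=q$, contradicting the assumption of a double threat and completing the induction.
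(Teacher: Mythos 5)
Your overall skeleton (induct over FP's moves, maintain a unique core $\mathcal{C}$, show forced replies create no threat and free moves create at most one) is in spirit the paper's plan, but the two places where the real work happens are exactly the places your sketch goes wrong. First, the forced-reply case is \emph{not} handled ``precisely as in Lemma~\ref{lem::noThreat}.'' That argument works by pinning down both hypothetical cores: the edges $r',g'$ lie inside $V(\mathcal{F}_2)\cup\{z'\}$, hence are disjoint from $e_1$, so Property (v) excludes $e_1$ from any core; in Lemma~\ref{lem::noThreat} what then remains of FP's threat-free graph has exactly $m-2$ edges, forcing both cores to coincide, and Lemma~\ref{lem::uniquerg} finishes. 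In the present lemma FP's threat-free graph is $\mathcal{C}\cup\{f\}$, which has $m-1$ edges, and $e_1$ need not belong to it at all: $e_1$ may itself be the apex edge of FP's original standard threat (the edge through the image of $z$). In that case there is nothing to exclude, the two cores can genuinely differ (say $\mathcal{C}$ and $(\mathcal{C}\setminus\{x\})\cup\{f\}$), and Lemma~\ref{lem::uniquerg} yields no contradiction. This is precisely where the paper deploys Property (vi): from $(r'\cup g')\setminus\{z'\}\subseteq V(\mathcal{F}^r)$ and $|V(\mathcal{F}_2)\setminus(r'\cup g')|<k-1$ one gets $|V(\mathcal{F}^r)\setminus V(\mathcal{F}_2)|<k-1$, while the standard-threat hypothesis of this very lemma gives $|e_1\cap V(\mathcal{F}^r)|\geq k-1$ with $e_1\cap V(\mathcal{F}_2)=\emptyset$, a counting contradiction. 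You invoke Property (vi) only in the free-move case, where it has no role: before FP's first non-threat move SP has made no Stage III threats, so there is no pair $(r',g')$ and no disjointness from $e_1$ to exploit (your sentence about ``Stage III edges that FP is forced to claim'' inside the free-move case conflates the two phases of the game).

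Second, in the free-move case the statement you actually need is not ``no double threat'' but the stronger fact that FP can \emph{never} make a special threat; the paper proves this by a separate rigidity argument: a hypothetical special threat's completed copy $\mathcal{H}_1$ must contain a previously claimed apex edge $e'_u$ whose old completion $e''_u$ was already blocked by SP, and Property (iv) forces the identity identification, so $e''_u\in\mathcal{H}_1$ and hence $e''_u\in\mathcal{G}_1\cap\mathcal{G}_2$, which is absurd. Your proposal would instead tolerate a single special threat and let SP block it, but that already destroys your own auxiliary invariant: a special-threat edge need not pass through a previously isolated vertex, so after it is played you can no longer argue that $\mathcal{C}$ is the unique copy of $\mathcal{H}\setminus\{z\}$ in $\mathcal{G}_1$, nor that every edge of $\mathcal{G}_1\setminus\mathcal{C}$ carries a private degree-one vertex -- and every later step of your induction (including the forced-reply analysis) uses both facts. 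Relatedly, note that SP's strategy, once it enters Stage III, keeps making its own threats rather than re-checking for FP's; so in the post-transition phase the invariant must be ``FP has \emph{no} threat,'' not ``at most one,'' which is exactly why the Property (vi) counting argument above cannot be dispensed with.
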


\begin{proof}
The basic idea behind this proof is that either FP continues making simple threats forever or, at some point, he makes a move which is not a standard threat. We will prove that, assuming SP does not win the game, in the former case there is always a unique threat which SP can block, and in the latter case, by making his own standard threats, SP can force FP to respond to these threats forever, without ever creating another threat of his own.  

We first claim that, if FP does win the game in some move $s$, then there must exist some $m \leq i < s$ such that FP's $i$th move is not a threat. Suppose for a contradiction that this is not the case. Assume first that, for every $m-1 \leq i < s$, FP's $i$th move is a standard threat. We will prove by induction on $i$ that, for every $m-1 \leq i < s$, immediately after FP's $i$th move, ${\mathcal G}_1$ satisfies the following three properties:
\begin{description}
\item [(a)] ${\mathcal G}_1$ contains a unique copy ${\mathcal F}_1$ of $\mathcal{H} \setminus \{z\}$;
\item [(b)] Let $e_{m-1}, \ldots, e_i$ denote the edges of ${\mathcal G}_1 \setminus {\mathcal F}_1$. Then, for every $m-1 \leq j \leq i$, there exists a vertex $z_j \in V({\mathcal G}_1)$ such that $\{z_j\} = e_j \setminus V({\mathcal F}_1)$ and $d_{{\mathcal G}_1}(z_j) = 1$;
\item [(c)] ${\mathcal F}_1 \cup \{e_i\}$ is open and ${\mathcal F}_1 \cup \{e_j\}$ is closed for every $m-1 \leq j < i$.  
\end{description}

Properties (a), (b) and (c) hold by assumption for $i = m-1$. Assume they hold for some $i \geq m-1$; we will prove they hold for $i+1$ as well. Immediately after FP's $i$th move, let $e'_i$ be a free edge such that ${\mathcal F}_1 \cup \{e_i, e'_i\} \cong {\mathcal H}$. Note that $e'_i$ exists by Property (c) and is unique by Lemma~\ref{lem::uniquerg}. According to his strategy, SP claims $e'_i$ thus closing ${\mathcal F}_1 \cup \{e_i\}$. By assumption, in his $(i+1)$th move FP makes a standard threat by claiming an edge $e_{i + 1}$. It follows that $e_{i + 1} \setminus V({\mathcal F}_1) = \{z_{i + 1}\}$, where, immediately after FP's $(i+1)$th move, $d_{{\mathcal G}_1}(z_{i + 1}) = 1$. Hence, Property (b) is satisfied immediately after FP's $(i+1)$th move. Since $\delta({\mathcal H} \setminus \{z\}) \geq 3$ holds by Property (ii), it follows that Property (a) is satisfied as well. Finally, ${\mathcal G}_1$ satisfies Property (c) by Lemma~\ref{lem::uniquerg}. Now, by Properties (a), (b) and (c), for every $m-1 \leq i < s$, immediately after FP's $i$th move there is a unique threat $e'_i$. According to his strategy, SP claims $e'_i$ in his $i$th and thus FP cannot win the game in his $(i+1)$th move. In particular, FP cannot win the game in his $s$th move, contrary to our assumption.

Assume then that there exists some $m \leq i < s$ such that FP makes a special threat in his $i$th move. We will prove that this is not possible. Consider the smallest such $i$. As discussed in the previous paragraph, immediately before FP's $i$th move, ${\mathcal G}_1$ contained a unique copy ${\mathcal F}_1$ of ${\mathcal H} \setminus \{z\}$, and every vertex of ${\mathcal G}_1 \setminus {\mathcal F}_1$ had degree one in ${\mathcal G}_1$. If FP makes a special threat in his $i$th move by claiming some edge $f'_1$, then there exists a free edge $f'_2$ such that, by claiming $f'_2$ in his $(i+1)$th move, FP would complete a copy ${\mathcal H}_1$ of ${\mathcal H}$. Since $|V({\mathcal F}_1)| < |V({\mathcal H})|$, there is some vertex $u \in V({\mathcal H}_1) \setminus V({\mathcal F}_1)$. Immediately after FP's $(i+1)$th move, the degree of $u$ in ${\mathcal G}_1$ is at most three. Hence, by Property (ii), $u$ must play the role of $z$ in ${\mathcal H}_1$. Therefore, ${\mathcal H}_1 = ({\mathcal F}_1 \cup \{f'_1, f'_2, e'_u\}) \setminus \{f'_3\}$, where $e'_u$ is the first edge incident with $u$ which FP has claimed and $f'_3$ is some edge of ${\mathcal F}_1$. Since, at some point in the game, $e'_u$ was a standard threat, and, at that point, ${\mathcal F}_1$ was the unique copy of ${\mathcal H} \setminus \{z\}$ in ${\mathcal G}_1$, there exists an edge $e''_u$ such that ${\mathcal H}' := {\mathcal F}_1 \cup \{e'_u, e''_u\} \cong {\mathcal H}$. Let $\phi : V({\mathcal H}') \rightarrow V(\mathcal{H}_1)$ be an isomorphism. It is evident that ${\mathcal H}' \setminus \{e''_u, f'_3\} = {\mathcal H}_1 \setminus \{f'_1, f'_2\}$ and that the restriction of $\phi$ to $V({\mathcal H}' \setminus \{e''_u, f'_3\})$ is a monomorphism and is thus the identity by Property (iv). However, $V({\mathcal H}' \setminus \{e''_u, f'_3\}) = V({\mathcal H}') = V({\mathcal F}_1) \cup \{u\} = V({\mathcal H}_1)$ and thus $\phi$ itself is the identity entailing $e''_u \in {\mathcal G}_1$. However, $e''_u \in {\mathcal G}_2$ holds by the description of the proposed strategy. Hence $e''_u \in {\mathcal G}_1 \cap {\mathcal G}_2$ which is clearly a contradiction.    

We conclude that there must exist some $m \leq i < s$ such that FP's $i$th move is not a threat. Let $\ell$ denote the first such move. In order to complete the proof of the lemma, we will prove by induction on $i$ that the following two properties hold immediately after FP's $i$th move for every $i \geq \ell$. 
\begin{description}
\item [(1)] FP has no threat.
\item [(2)] Let ${\mathcal G}'_1 = {\mathcal F}_1 \cup \{f\}$, where ${\mathcal F}_1$ is the unique copy of ${\mathcal H} \setminus \{z\}$ FP has built during his first $m-1$ moves and $f$ is the edge FP has claimed in his $\ell$th move. Then ${\mathcal G}_1 \setminus {\mathcal G}'_1$ consists of $i-m+1$ edges $e_m, \ldots, e_i$, where, for every $m \leq j \leq i$, $e_j$ contains a vertex $z_j$ such that $d_{{\mathcal G}_1}(z_j) = 1$.  
\end{description}

Properties (1) and (2) hold for $i = \ell$ by assumption, by the choice of $\ell$ and by Properties (a) -- (c) above. Assume they hold for some $i \geq \ell$; we will prove they hold for $i+1$ as well. Proving Property (2) can be done by essentially the same argument as the one used to prove Property (b) in Lemma~\ref{lem::noThreat}; the details are therefore omitted. Suppose for a contradiction that Property (1) does not hold immediately after FP's $(i+1)$th move. As in the proof of Property (a) in Lemma~\ref{lem::noThreat}, it follows that there are free edges $r''$ and $g''$ and graphs ${\mathcal F}^r \subseteq {\mathcal G}'_1$ and ${\mathcal F}^g \subseteq {\mathcal G}'_1$ such that ${\mathcal F}^r \cup \{r', g''\} \cong \mathcal{H} \cong {\mathcal F}^g \cup \{r'', g'\}$. Since ${\mathcal F}^r \subseteq {\mathcal G}'_1$ and ${\mathcal F}^g \subseteq {\mathcal G}'_1$, it follows by Property (ii) that $V({\mathcal F}^r) = V({\mathcal F}^g)$. Let ${\mathcal F}_2 \subseteq {\mathcal G}_2$ be such that ${\mathcal F}_2 \cup \{r', g'\} \cong \mathcal{H}$ and let $z'$ be the unique vertex in $r' \setminus V({\mathcal F}_2)$. Note that $r' \setminus \{z'\} \subseteq V({\mathcal F}^r)$ and $g' \setminus \{z'\} \subseteq V({\mathcal F}^g)$. Hence $(r' \cup g') \setminus \{z'\} \subseteq V({\mathcal F}^r)$. By Property (vi), we then have $|V({\mathcal F}_2) \setminus V({\mathcal F}^r)| \leq |V({\mathcal F}_2) \setminus (r' \cup g')| < k-1$. However, $e_1 \cap V({\mathcal F}_2) = \emptyset$ holds by the description of the proposed strategy and $|e_1 \cap V({\mathcal F}^r)| \geq k-1$ holds by our assumption that FP's $(m-1)$th move was a threat. This implies that $k-1 \leq |e_1 \cap V({\mathcal F}^r)| \leq |V({\mathcal F}^r) \setminus V({\mathcal F}_2)| = |V({\mathcal F}_2) \setminus V({\mathcal F}^r)| < k-1$ which is clearly a contradiction. We conclude that Property (1) does hold immediately after FP's $(i+1)$th move.      
\end{proof}

Since FP's $(m-1)$th move is either a standard threat or a special threat or no threat at all, Theorem~\ref{th::HypergraphProperties} follows immediately from Lemmata~\ref{lem::noThreat}, \ref{lem::specialThreat} and~\ref{lem::standardThreat}. 
{\hfill $\Box$ \medskip\\}

\section{An explicit construction} \label{sec::example}
\newcommand{\e}[1]
{
  \ifnum\pdfstrcmp{#1}{r}=0
   r
  \else
  \ifnum\pdfstrcmp{#1}{g}=0
   g
  \else
  \ifnum\pdfstrcmp{#1}{a}=0
   a
  \else
  \ifnum\pdfstrcmp{#1}{9,4}=0
   b  
  \else
  \ifnum\pdfstrcmp{#1}{4,9}=0
   b  
  \else
  \ifnum\pdfstrcmp{#1}{1,5}=0
   e_1  
  \else
  \ifnum\pdfstrcmp{#1}{2,6}=0
   e_2  
  \else
  \ifnum\pdfstrcmp{#1}{3,7}=0
   e_3 
  \else
  \ifnum\pdfstrcmp{#1}{4,8}=0
   e_4 
  \else
  \ifnum\pdfstrcmp{#1}{5,9}=0
   e_5 
  \else
  \ifnum\pdfstrcmp{#1}{9,5}=0
   e_5  
  \else
   e_{?#1?}
  \fi\fi\fi  \fi\fi\fi  \fi\fi\fi  \fi\fi
}

\newcommand{\V}[1]
{
  \ifnum\pdfstrcmp{#1}{z}=0
    z  
  \else
   v_{#1}
  \fi
}

\newcommand{\Hef}{\mathcal H_{ef}}

In this section we will describe a $5$-graph ${\mathcal H}$ which satisfies Properties (i) -- (vi) from Theorem~\ref{th::HypergraphProperties} and thus $\mathcal{R}^{(5)}(\mathcal{H}, \aleph_0)$ is a draw. The vertex set of $\mathcal{H}$ is $\{\V{z}, \V{1},\V{2},\V{3},\V{4},\V{5},\V{6},\V{7},\V{8},\V{9}\}$, and its edges are 
\begin{align*}
\e{r}&=\{\V{z}, \V{1}, \V{3}, \V{5}, \V{8}\},\\
\e{g}&=\{\V{z}, \V{2}, \V{4}, \V{7}, \V{9}\},\\
\e{a}&=\{\V{1}, \V{4}, \V{6}, \V{8}, \V{9}\},\\
\e{9,4}&=\{\V{9}, \V{1}, \V{2}, \V{3}, \V{4}\},\\
\e{1,5}&=\{\V{1}, \V{2}, \V{3}, \V{4}, \V{5}\},\\
\e{2,6}&=\{\V{2}, \V{3}, \V{4}, \V{5}, \V{6}\},\\
\e{3,7}&=\{\V{3}, \V{4}, \V{5}, \V{6}, \V{7}\},\\
\e{4,8}&=\{\V{4}, \V{5}, \V{6}, \V{7}, \V{8}\},\\
\e{5,9}&=\{\V{5}, \V{6}, \V{7}, \V{8}, \V{9}\}.
\end{align*}

\begin{figure} \label{fig::H5}
\centering
\includegraphics[scale=1]{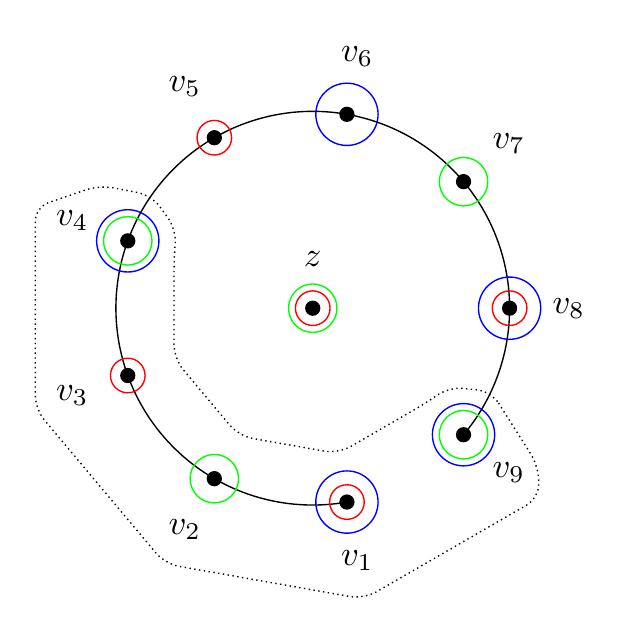}
\caption{The $5$-uniform hypergraph $\mathcal{H}$. The black line from $v_1$ to $v_9$ represents the tight path consisting of the edges $\e{1,5}, \dots, \e{5,9}$.}
\end{figure}

It readily follows from the definition of $\mathcal{H}$ that it satisfies Properties (i), (ii), (v) and (vi) from Theorem~\ref{th::HypergraphProperties}. We claim that it satisfies Properties (iii) and (iv) as well. We start with Property (iii).

\begin{lemma} \label{lem::property3}
${\mathcal H} \setminus \{\V{z}\}$ has a fast winning strategy.
\end{lemma}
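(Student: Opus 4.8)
The plan is to exploit that $\mathcal{H}\setminus\{z\}$ is a tight path $e_1,e_2,e_3,e_4,e_5$ on the vertices $v_1,\dots,v_9$ together with the two extra (chord) edges $a=\{v_1,v_4,v_6,v_8,v_9\}$ and $b=\{v_1,v_2,v_3,v_4,v_9\}$. The basic engine I would use is the standard infinite-board observation: whenever it is the builder's turn and the edge he wishes to claim still contains a vertex not yet used in his partial copy, he can claim it. Indeed, the image of that unused vertex may be chosen among the infinitely many untouched vertices of $K^k_{\mathbb{N}}$, giving infinitely many candidate edges, while the opponent has so far blocked only finitely many of them. Building the path in the order $e_1,e_2,e_3,e_4,e_5$ therefore costs exactly five moves, since $e_1$ introduces five fresh vertices and each subsequent $e_i$ introduces exactly the one new vertex $v_{i+4}$.

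The difficulty is claiming the two chords, and a counting argument shows this difficulty is unavoidable. A copy of $\mathcal{H}\setminus\{z\}$ has $9$ vertices and $7$ edges, so in any order in which the builder claims his edges, the first edge accounts for five vertices and the remaining six edges account for at most four further vertices; hence at least two of them are \emph{closing} edges, all of whose vertices are already present at the moment they are claimed. Such a closing edge is a single determined edge of $K^k_{\mathbb{N}}$, and if the builder ever presents it as his \emph{only} way to finish, the opponent simply claims it on the intervening move. Thus the heart of the matter is to secure two determined edges against an adversary who can destroy one determined edge per turn.

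I would resolve this by never letting the builder depend on a single completion: the strategy should maintain the invariant that, after each of the builder's moves, the edges he has already claimed extend to a copy of $\mathcal{H}\setminus\{z\}$ whose remaining edges are all still free, and it should carry a \emph{double threat} into the endgame. Concretely, while building the path the builder uses his freedom in placing the fresh vertices $v_5,\dots,v_9$, and in particular keeps a vertex shared by the two chords (note $a\cap b=\{v_1,v_4,v_9\}$) uncommitted for as long as possible, so that a chord attacked by the opponent can still be re-embedded via a fresh vertex, and so that at the final stage the two chords can be arranged as two distinct free edges each of which completes a copy. This is exactly the point at which the concrete incidences of $a$ and $b$ with the path are used. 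The main obstacle, and the only nonroutine part, is to verify that this fork is genuinely available for \emph{both} determined edges simultaneously—equivalently, that the relevant near-complete configuration is not rigid and admits two inequivalent completing edges, and that whichever chord the opponent attacks first, the builder retains a completing move—which I would check by a short case analysis over the opponent's blocking moves; the greedy path-building above is then routine by comparison.
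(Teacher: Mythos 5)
Your overall architecture matches the paper's: build the tight path greedily in five moves (each edge introducing one brand-new vertex, which the infinite board always permits), then fight the endgame over the remaining, fully determined edges. But the endgame is the entire content of the lemma, and precisely there your proposal has a genuine gap. First, the two mechanisms you offer cannot deliver the needed fork. Re-embedding a blocked chord ``via a fresh vertex'' is impossible in a seven-move win: once the five path edges span nine vertices, any copy completed with only two further edges must live on exactly those nine vertices, so fresh vertices are useless; and before $v_9$ is placed the opponent cannot attack a chord at all (every edge completing the built path contains the still-unchosen $v_9$, since the two non-path edges of $\mathcal{H}\setminus\{z\}$ contain both endpoints of each of its tight paths), so keeping a vertex of $a\cap b$ ``uncommitted'' buys nothing beyond greedy freshness. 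Second, the fork you hope for does not follow from the symmetry you invoke. The built path $v_1\dots v_9$ extends to a copy via the identity pair $\{v_1,v_2,v_3,v_4,v_9\},\{v_1,v_4,v_6,v_8,v_9\}$ and via the reversal pair $\{v_1,v_6,v_7,v_8,v_9\},\{v_1,v_2,v_4,v_6,v_9\}$; these two pairs are edge-disjoint, and two edge-disjoint completions lose the blocking race: the opponent hits one edge of whichever pair you start, and then the second edge of the other pair.

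What actually saves the day --- and what the paper exhibits explicitly --- is that there are \emph{four} completions, the extra two coming from the fact that $\mathcal{H}\setminus\{z\}$ contains a second tight path of length five, namely $(b,e_1,e_2,e_3,e_4)$ with $b=\{v_9,v_1,v_2,v_3,v_4\}$, so the built path can also play that role (a shift-type embedding, not a reversal). Writing $A=\{v_1,v_2,v_3,v_4,v_9\}$, $B=\{v_1,v_4,v_6,v_8,v_9\}$, $C=\{v_1,v_6,v_7,v_8,v_9\}$, $D=\{v_1,v_2,v_4,v_6,v_9\}$, $E=\{v_1,v_2,v_5,v_7,v_9\}$, $F=\{v_1,v_3,v_5,v_8,v_9\}$, the completion pairs are $\{A,B\}$, $\{A,F\}$, $\{C,D\}$, $\{C,E\}$: two forks with pivot edges $A$ and $C$. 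The builder's sixth move must \emph{adaptively} claim the pivot of an untouched fork (claim $C$ if the opponent's sixth move hit $A$, $B$ or $F$; claim $A$ otherwise), which creates a double threat that the opponent's single reply cannot cover. Without identifying the shift embedding and these six edges, your ``short case analysis'' cannot be completed --- indeed its most natural instantiation, with only the identity and reversal completions, fails. That identification, not the greedy path building, is the substance of the paper's proof.
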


\begin{proof}
We describe a strategy for SP to build a copy of ${\mathcal H} \setminus \{\V{z}\}$ in seven moves. The basic idea is to build a tight path of length $5$ in five moves, and then to use certain symmetries of ${\mathcal H} \setminus \{\V{z}\}$ in order to complete a copy of ${\mathcal H} \setminus \{\V{z}\}$ in two additional moves. Our strategy is divided into the following three stages.

\bigskip

\noindent \textbf{Stage I:} In his first move, SP claims an arbitrary free edge $e_1 = \{v_1, v_2, v_3, v_4, v_5\}$. For every $2 \leq i \leq 5$, in his $i$th move SP picks a vertex $v_{i+4}$ which is isolated in both his and FP's current graphs and claims the edge $e_i = \{v_i, v_{i+1}, v_{i+2}, v_{i+3}, v_{i+4}\}$. If in his $6$th move FP claims either $\{v_1, v_2, v_3, v_4, v_9\}$ or $\{v_1, v_4, v_6, v_8, v_9\}$ or $\{v_1, v_3, v_5, v_8, v_9\}$, then SP claims $\{v_1, v_6, v_7, v_8, v_9\}$ and proceeds to Stage II. Otherwise, SP claims $\{v_1, v_2, v_3, v_4, v_9\}$ and skips to Stage III. 

\medskip

\noindent \textbf{Stage II:} If in his seventh move FP claims $\{v_1, v_2, v_4, v_6, v_9\}$, then SP claims $\{v_1, v_2, v_5, v_7, v_9\}$. Otherwise, SP claims $\{v_1, v_2, v_4, v_6, v_9\}$.  

\medskip

\noindent \textbf{Stage III:} If in his seventh move FP claims $\{v_1, v_4, v_6, v_8, v_9\}$, then SP claims $\{v_1, v_3, v_5, v_8, v_9\}$. Otherwise, SP claims $\{v_1, v_4, v_6, v_8, v_9\}$.	

\bigskip

It is easy to see that SP can indeed play according to the proposed strategy and that, in each of the possible cases, the graph he builds is isomorphic to ${\mathcal H} \setminus \{z\}$. 
\end{proof}

It remains to prove that ${\mathcal H}$ satisfies Property (iv). We begin by introducing some additional notation. If $\phi : V(\mathcal{H}) \rightarrow V(\mathcal{H})$ is a monomorphism, and $e = \{a_1, a_2, \ldots, a_5\} \in \mathcal{H}$, then we set $\phi(e) := \{\phi(a_1), \phi(a_2), \ldots, \phi(a_5)\}$. For two edges $e, f \in \mathcal{H}$, let $\mathcal{H}_{ef} = \mathcal{H} \setminus \{e, f\}$.  

Next, we observe several simple properties of $\mathcal{H}$ and of monomorphisms. Table~\ref{DegreeTable} shows the degrees of the vertices in $\mathcal H$ and Table~\ref{IntersectionTable} shows the sizes of intersections of pairs of edges in $\mathcal H$.
\begin{table}[h]
\begin{center}
\begin{tabular}{|l*{10}{|p{0.6cm}}|}
 \hline
    Vertex    &$\V{1}$&$\V{2}$&$\V{3}$&$\V{4}$&$\V{5}$&$\V{6}$&$\V{7}$&$\V{8}$&$\V{9}$&$\V{z}$ \\ \hline
	Degree    &  4    &   4   &  5    &   7   &    6  &    5  &    4  &    4  &  4    &   2   \\ \hline
\end{tabular}
\end{center}
\caption{Degrees of vertices in $\mathcal{H}$.}\label{DegreeTable}
\end{table} 

\begin{table}[h]
\begin{center}
\begin{tabular}{|l*{9}{|p{0.6cm}}|}
 \hline
              &$\e{r}$&$\e{g}$&$\e{a}$&$\e{9,4}$&$\e{1,5}$ &$\e{2,6}$&$\e{3,7}$&$\e{4,8}$&$\e{5,9}$ \\ \hline
	$\e{r}$     &       &   1   &  2    &    2    &    3     &    2    &    2    &    2    &    2    \\ \hline
	$\e{g}$     &   1   &       &  2    &    3    &    2     &    2    &    2    &    2    &    2    \\ \hline
	$\e{a}$     &   2   &   2   &       &    3    &    2     &    2    &    2    &    3    &    3    \\ \hline
	$\e{9,4}$   &   2   &   3   &  3    &         &    4     &    3    &    2    &    1    &    1    \\ \hline
	$\e{1,5}$   &   3   &   2   &  2    &    4    &          &    4    &    3    &    2    &    1    \\ \hline
	$\e{2,6}$   &   2   &   2   &  2    &    3    &    4     &         &    4    &    3    &    2    \\ \hline
	$\e{3,7}$   &   2   &   2   &  2    &    2    &    3     &    4    &         &    4    &    3    \\ \hline
	$\e{4,8}$   &   2   &   2   &  3    &    1    &    2     &    3    &    4    &         &    4    \\ \hline
	$\e{5,9}$   &   2   &   2   &  3    &    1    &    1     &    2    &    3    &    4    &         \\ \hline
\end{tabular}
\caption{Intersection sizes of pairs of edges in $\mathcal H$.}\label{IntersectionTable}
\end{center}
\end{table}

\begin{observation} \label{obs::H5properties}
The hypergraph $\mathcal{H}$ satisfies all of the following properties:
\begin{description}
\item[(1)] $V(\mathcal{H}) \setminus \{\e{r}, \e{g}\} = \{\V6\}$ and $\e{r} \cap \e{g} = \{\V{z}\}$.
\item[(2)] $\e{1,5}$ is the unique edge satisfying $|\e{1,5} \cap \e{r}| = 3$ and $|\e{1,5} \cap \e{g}| = 2$.
\item[(3)] $\e{4,9}$ is the unique edge satisfying $|\e{4,9} \cap \e{g}| = 3$ and $|\e{4,9} \cap \e{r}| = 2$.
\item[(4)] There are precisely two tight paths of length five in $\mathcal{H}$, namely, $TP_1 := (\e{1,5}, \e{2,6}, \e{3,7}, \e{4,8}, \e{5,9})$ and $TP_2 := (\e{9,4}, \e{1,5}, \e{2,6}, \e{3,7}, \e{4,8})$.    
\item[(5)] For every two vertices $u, v \in V(\mathcal{H})$, there are three edges $f_1, f_2, f_3 \in \mathcal{H}$ such that $|f_i \cap \{u,v\}| = 1$ for every $1 \leq i \leq 3$.   
\end{description}
\end{observation}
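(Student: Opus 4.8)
The plan is to establish the five properties one by one, each by a direct inspection of the edge list of $\mathcal{H}$ together with the degree data in Table~\ref{DegreeTable} and the pairwise intersection data in Table~\ref{IntersectionTable}; no clever argument is needed beyond organizing the bookkeeping so that the uniqueness and completeness assertions become transparent.

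Properties (1)--(3) I would dispatch immediately. For (1) I compute $r \cup g = \{z, v_1, v_2, v_3, v_4, v_5, v_7, v_8, v_9\}$, so that $V(\mathcal{H}) \setminus (r \cup g) = \{v_6\}$, and read off from the definitions that the only vertex common to $r$ and $g$ is $z$. For (2) and (3) I scan the columns of Table~\ref{IntersectionTable} indexed by $r$ and by $g$: the single entry equal to $3$ in the $r$-column occurs at $e_1$ (with $|e_1 \cap g| = 2$), and the single entry equal to $3$ in the $g$-column occurs at $b$ (with $|b \cap r| = 2$); reading a \emph{unique} such entry simultaneously yields both existence and uniqueness.

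For (4) the guiding observation is that consecutive edges of any tight path meet in exactly $k-1 = 4$ vertices. I would therefore form the auxiliary graph $L$ on vertex set $E(\mathcal{H})$ joining two edges whenever their intersection has size $4$. Scanning Table~\ref{IntersectionTable} for the entries equal to $4$ shows that $L$ is precisely the path $b - e_1 - e_2 - e_3 - e_4 - e_5$, while $r$, $g$ and $a$ meet every edge in at most $3$ vertices and are hence isolated in $L$. The edge sequence of any length-$5$ tight path must then be a path on five distinct vertices inside $L$, and a simple path on six vertices has exactly two such subpaths, namely $(b, e_1, e_2, e_3, e_4)$ and $(e_1, e_2, e_3, e_4, e_5)$. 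To conclude I would exhibit the vertex orderings $(v_9, v_1, v_2, v_3, v_4, v_5, v_6, v_7, v_8)$ and $(v_1, v_2, \ldots, v_9)$ certifying that these two candidates really are tight paths, so that the necessary size-$4$ condition is here also sufficient.

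For (5), fix a pair $\{u, v\}$ and let $c_{uv}$ be the number of edges containing both $u$ and $v$; then the number of edges meeting $\{u, v\}$ in exactly one vertex equals $d_{\mathcal{H}}(u) + d_{\mathcal{H}}(v) - 2 c_{uv}$, and I must show this is at least $3$. If $z \in \{u, v\}$, then since $z$ lies only in $r$ and $g$ and $r \cap g = \{z\}$ we have $c_{uv} \le 1$, and using $d_{\mathcal{H}}(z) = 2$ together with $d_{\mathcal{H}}(v) \ge 4$ the count is at least $4$. If $z \notin \{u, v\}$, then both degrees are at least $4$, and listing (from the edge list) the edges through each vertex shows that the codegree never forces the count below $3$; the minimum value $3$ is attained exactly at the pair $\{v_5, v_6\}$, where $c_{v_5,v_6} = 4$ and $d_{\mathcal{H}}(v_5) + d_{\mathcal{H}}(v_6) = 11$. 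The only steps demanding genuine (if routine) care are the completeness claim in (4) --- that the two listed tight paths are the only ones --- and this last codegree accounting in (5); everything else is a direct table lookup.
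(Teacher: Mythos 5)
Your proof is correct and takes essentially the same route as the paper: the paper states these facts as an Observation with no written proof, the intended justification being exactly the direct inspection of the edge list and of the degree/intersection tables that you carry out, and your auxiliary graph $L$ for property (4) and the count $d_{\mathcal H}(u)+d_{\mathcal H}(v)-2c_{uv}$ for property (5) are clean ways of organizing that inspection. One factual slip, though it is harmless: in (5) the minimum value $3$ is \emph{not} attained only at $\{v_5,v_6\}$; it is also attained at $\{v_1,v_3\}$, $\{v_2,v_3\}$, $\{v_2,v_4\}$, $\{v_3,v_5\}$, $\{v_6,v_7\}$ and $\{v_6,v_8\}$ (for instance $d_{\mathcal H}(v_1)+d_{\mathcal H}(v_3)=9$ with codegree $3$, since $v_1$ and $v_3$ lie together in $r$, $b$ and $e_1$). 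Since this aside is never used and every pair of vertices still meets the required bound of at least $3$, the argument stands; just delete the word ``exactly'' or list all the minimizing pairs.
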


\begin{observation} \label{obs::monomorphisms}
Let $\mathcal{F}$ and $\mathcal{F}'$ be $k$-graphs, where $\mathcal{F}' \subseteq \mathcal{F}$, and let $\phi : V(\mathcal{F}') \rightarrow V(\mathcal{F})$ be a monomorphism. Then
\begin{description}
\item[(a)] $d_{\mathcal F}(\phi(x)) \geq d_{\mathcal{F}'}(x)\geq d_{\mathcal F}(\phi(x))-|E(\mathcal F \setminus \mathcal F')|$ holds for every $x \in V(\mathcal{F}')$. 
\item[(b)] If $P$ is a tight path of length $\ell$ in $\mathcal{F}'$, then $\phi(P)$ is a tight path of length $\ell$ in $\mathcal{F}$.   \item[(c)] Let $P = (f_1, f_2, \ldots, f_m)$ be a tight path in $\mathcal{F}'$, where $m \geq k$ and $f_i = \{p_i, \ldots, p_{i + k - 1}\}$ for every $1 \leq i \leq m$. If $\phi(P) = (e_1, e_2, \ldots, e_m)$, where $e_i = \{q_i, \ldots, q_{i + k - 1}\}$ for every $1 \leq i \leq m$, then either $\phi(p_i) = q_i$ for every $1 \leq i \leq m+k-1$ or $\phi(p_i) = q_{m+k-i}$ for every $1 \leq i \leq m+k-1$.    
\item[(d)] For any pair of edges $x, y \in \mathcal F'$ we have $|\phi(x) \cap \phi(y)| = |x \cap y|$.
\end{description}
\end{observation}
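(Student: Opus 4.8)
The plan is to reduce all four parts to a single structural fact about the \emph{image} of $\phi$. For an edge $e \in E(\mathcal{F}')$ write $\phi(e) := \{\phi(a) : a \in e\}$, and let $\phi(\mathcal{F}')$ denote the $k$-graph on vertex set $\phi(V(\mathcal{F}'))$ whose edge set is $\{\phi(e) : e \in E(\mathcal{F}')\}$. Since $\phi$ is injective on vertices, distinct $k$-sets have distinct images, so $\phi$ is injective on edges and $|E(\phi(\mathcal{F}'))| = |E(\mathcal{F}')|$; since $\phi$ is a monomorphism, $\phi(e) \in E(\mathcal{F})$ for every $e \in E(\mathcal{F}')$, i.e. $E(\phi(\mathcal{F}')) \subseteq E(\mathcal{F})$. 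The fact that drives everything is therefore that $\phi(\mathcal{F}')$ is a subgraph of $\mathcal{F}$ omitting exactly $|E(\mathcal{F})| - |E(\mathcal{F}')| = |E(\mathcal{F} \setminus \mathcal{F}')|$ of the edges of $\mathcal{F}$. Alongside this I would record the identity $d_{\phi(\mathcal{F}')}(\phi(x)) = d_{\mathcal{F}'}(x)$ for every $x \in V(\mathcal{F}')$: the map $e \mapsto \phi(e)$ is a bijection between the edges of $\mathcal{F}'$ containing $x$ and the edges of $\phi(\mathcal{F}')$ containing $\phi(x)$, because injectivity gives $x \in e \iff \phi(x) \in \phi(e)$ for $e \in E(\mathcal{F}')$.

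For part (a), both inequalities now read off this setup. The left inequality is the chain $d_{\mathcal{F}'}(x) = d_{\phi(\mathcal{F}')}(\phi(x)) \leq d_{\mathcal{F}}(\phi(x))$, where the last step uses $E(\phi(\mathcal{F}')) \subseteq E(\mathcal{F})$. For the right inequality, observe that $d_{\mathcal{F}}(\phi(x)) - d_{\mathcal{F}'}(x)$ counts exactly the edges of $\mathcal{F}$ through $\phi(x)$ that do not lie in $E(\phi(\mathcal{F}'))$, and in total there are only $|E(\mathcal{F}) \setminus E(\phi(\mathcal{F}'))| = |E(\mathcal{F} \setminus \mathcal{F}')|$ such edges. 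I expect this right inequality to be the only real content of the observation, and so the main obstacle: the naive attempt to compare the $\mathcal{F}'$-edges through $x$ directly with the $\mathcal{F}'$-edges through $\phi(x)$ breaks down because $\phi$ need not be surjective (an $\mathcal{F}'$-edge through $\phi(x)$ need not be an image of an $x$-edge). The resolution is precisely to compare against the image graph $\phi(\mathcal{F}')$, which misses only $|E(\mathcal{F} \setminus \mathcal{F}')|$ edges of $\mathcal{F}$ no matter how $\phi$ behaves.

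Parts (b), (c) and (d) are then direct consequences of injectivity. For (d), injectivity yields $\phi(x) \cap \phi(y) = \phi(x \cap y)$, whence $|\phi(x) \cap \phi(y)| = |x \cap y|$. For (b), applying $\phi$ to the vertex sequence $u_1, \ldots, u_{\ell+k-1}$ of $P$ produces distinct vertices whose consecutive $k$-windows are the edges $\phi(e_1), \ldots, \phi(e_\ell) \in E(\mathcal{F})$, so $\phi(P)$ is a tight path of length $\ell$. For (c), I would first invoke (b) so that $(\phi(f_1), \ldots, \phi(f_m))$ is a tight-path edge ordering; since a tight path with $m \geq 2$ edges admits exactly two such orderings, namely $(e_1, \ldots, e_m)$ and its reverse, this sequence must be one of them. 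In the forward case one reads off $\phi(p_i) = q_i$ by matching the singleton differences $f_i \setminus f_{i+1} = \{p_i\}$ and $f_{i} \setminus f_{i-1} = \{p_{i+k-1}\}$ with $e_i \setminus e_{i+1} = \{q_i\}$ and $e_i \setminus e_{i-1} = \{q_{i+k-1}\}$ under the injective $\phi$ (sweeping from both ends pins every index once $m \geq k$, the at most one remaining vertex being forced by injectivity); the reverse case gives $\phi(p_i) = q_{m+k-i}$ by the identical computation. Thus the entire proof rests on the image-degree bookkeeping of part (a), with (b)--(d) being mechanical.
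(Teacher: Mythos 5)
Your proof is correct. The paper offers no proof of this Observation at all---it is stated as a routine fact and used directly---so there is nothing to compare against; your write-up simply supplies the details the paper omits, and it does so soundly. The one genuinely delicate point is part (c): you correctly reduce to the fact that a tight path's edge ordering is unique up to reversal (consecutive edges must meet in $k-1$ vertices, which forces the index permutation to be monotone), and your two-ended sweep via the singleton differences $f_i \setminus f_{i\pm1}$ pins every vertex except possibly the one of index $k$ in the boundary case $m=k$, which is then forced by the bijectivity of $\phi$ between the two vertex sets---exactly where the hypothesis $m \geq k$ is needed.
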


We prove that ${\mathcal H}$ satisfies Property (iv) in a sequence of lemmata.

\begin{lemma} \label{3edges}
Let $e$ and $f$ be two arbitrary edges of ${\mathcal H}$ and let $\phi : V(\mathcal{H}_{ef}) \rightarrow V(\mathcal{H})$ be a monomorphism. If $\phi(e') = e'$ holds for every edge $e' \in \mathcal{H}_{ef}$, then $\phi$ is the identity.
\end{lemma}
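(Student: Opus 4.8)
The plan is to show that $\phi$ fixes every vertex of $\mathcal{H}_{ef}$ individually, by exploiting the fact that in $\mathcal{H}$ any two distinct vertices are separated by at least three edges --- which is one more than the number of edges we delete, so at least one separating edge always survives in $\mathcal{H}_{ef}$.

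First I would record an elementary consequence of the hypothesis. Since $\phi(e') = e'$ for every edge $e' \in \mathcal{H}_{ef}$ and $\phi$ is injective, the restriction of $\phi$ to such an $e'$ is an injection of $e'$ into $\phi(e') = e'$, hence (as $|e'| = k$) a bijection of $e'$ onto itself. Consequently, for any vertex $v \in V(\mathcal{H}_{ef})$ and any edge $e' \in \mathcal{H}_{ef}$, we have $v \in e'$ if and only if $\phi(v) \in e'$: the forward direction is immediate, and conversely if $\phi(v) \in e' = \phi(e')$ then $\phi(v) = \phi(u)$ for some $u \in e'$, whence $u = v \in e'$ by injectivity. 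In other words, $v$ and $\phi(v)$ are incident with exactly the same edges of $\mathcal{H}_{ef}$.

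Next I would argue by contradiction. Suppose $\phi(v) = w \neq v$ for some $v \in V(\mathcal{H}_{ef})$. By Property~(5) of Observation~\ref{obs::H5properties}, there are three edges $f_1, f_2, f_3 \in \mathcal{H}$ with $|f_i \cap \{v,w\}| = 1$ for each $i$, i.e.\ each $f_i$ contains exactly one of $v$ and $w$. Since $\mathcal{H}_{ef}$ is obtained from $\mathcal{H}$ by deleting only the two edges $e$ and $f$, at least one of $f_1, f_2, f_3$ --- say $f_1$ --- lies in $\mathcal{H}_{ef}$. But then $f_1 \in \mathcal{H}_{ef}$ contains exactly one of $v$ and $w = \phi(v)$, directly contradicting the conclusion of the previous paragraph that $v \in f_1$ if and only if $\phi(v) \in f_1$. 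Hence $\phi(v) = v$, and since $v$ was arbitrary, $\phi$ is the identity.

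The argument is short, and I expect no serious obstacle; the only delicate point is the bookkeeping that at least one separating edge is retained, and this is precisely where the numerology matters. Property~(5) furnishes three separating edges while we delete only two, so one always remains --- Observation~\ref{obs::H5properties}(5) is calibrated exactly for this two-edge deletion. The other point I would verify carefully is that the ``bijection of $e'$ onto itself'' step genuinely uses both the injectivity of $\phi$ and the equality $\phi(e') = e'$ (not merely $\phi(e') \subseteq e'$ or some weaker containment), since that equivalence is the hinge of the whole proof.
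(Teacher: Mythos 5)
Your proof is correct and follows essentially the same route as the paper's: assume $\phi(v) = w \neq v$, invoke Observation~\ref{obs::H5properties}(5) to get three edges separating $v$ from $w$, note that at least one of them survives in $\mathcal{H}_{ef}$ and is fixed by $\phi$, and derive a contradiction. The only difference is presentational --- you make explicit the step that $\phi$ restricts to a bijection of each fixed edge onto itself (so $v \in e' \Leftrightarrow \phi(v) \in e'$), which the paper compresses into the single sentence ``it follows that $\{u,v\} \subseteq f_1$.''
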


\begin{proof}
Suppose for a contradiction that $\phi$ is not the identity. Then, there exist distinct vertices $u, v \in V(\mathcal{H}_{ef})$ such that $\phi(u) = v$. By Observation~\ref{obs::H5properties}(5), there are three edges $f_1, f_2, f_3 \in {\mathcal H}$ such that $|f_i \cap \{u, v\}| = 1 $ for every $1 \leq i \leq 3$. Clearly, we may assume that $f_1 \notin \{e, f\}$ and thus $\phi(f_1) = f_1$ by the assumption of the lemma. Since $\phi(u) = v$, it follows that $\{u, v\} \subseteq f_1$ which is a contradiction.
\end{proof}

\begin{lemma} \label{zFixed}
Let $\phi : V(\mathcal{H}_{ef}) \rightarrow V(\mathcal{H})$ be a monomorphism. Then $\phi(\V{z}) = \V{z}$.
\end{lemma}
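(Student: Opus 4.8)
The plan is to show that the degree-2 vertex $z$ of $\mathcal{H}$ is essentially forced to be fixed by any monomorphism $\phi : V(\mathcal{H}_{ef}) \to V(\mathcal{H})$. The key leverage is Observation~\ref{obs::monomorphisms}(a): deleting only two edges $e, f$ to form $\mathcal{H}_{ef}$ means that $|E(\mathcal{H} \setminus \mathcal{H}_{ef})| \leq 2$, so for every vertex $x \in V(\mathcal{H}_{ef})$ we have the sandwich $d_{\mathcal H}(\phi(x)) \geq d_{\mathcal{H}_{ef}}(x) \geq d_{\mathcal H}(\phi(x)) - 2$. The plan is to exploit the degree data in Table~\ref{DegreeTable}, where $z$ is the unique degree-2 vertex and every other vertex has degree at least $4$.

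First I would note that, since $z$ lies on exactly two edges $r$ and $g$, its degree in $\mathcal{H}_{ef}$ is $2 - |\{r,g\} \cap \{e,f\}|$. I would show $\phi(z) = z$ by ruling out the alternative $\phi(z) = w$ for some $w \neq z$. For such $w$, Table~\ref{DegreeTable} gives $d_{\mathcal H}(w) \geq 4$, so by Observation~\ref{obs::monomorphisms}(a) we would need $d_{\mathcal{H}_{ef}}(z) \geq d_{\mathcal H}(w) - 2 \geq 2$, which forces $d_{\mathcal{H}_{ef}}(z) = 2$, i.e. $\{e,f\} \cap \{r,g\} = \emptyset$, and moreover $d_{\mathcal H}(w) = 4$ with both edges through $w$ surviving into $\mathcal{H}_{ef}$. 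The task then reduces to the finitely many candidate target vertices $w$ of degree exactly $4$ (namely $v_1, v_2, v_7, v_8, v_9$), and for each I would derive a contradiction.

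The contradiction I would aim for is structural: $z$ is incident to both $r$ and $g$, whose intersection is exactly $\{z\}$ by Observation~\ref{obs::H5properties}(1). So in $\mathcal{H}_{ef}$ (with $e, f \notin \{r,g\}$) the vertex $z$ still lies on two edges meeting only in $z$, and $\phi$ must carry these to two surviving edges through $w$ that meet only in $\phi(z) = w$. I would check, using the intersection data of Table~\ref{IntersectionTable} together with Observation~\ref{obs::monomorphisms}(d), that no degree-$4$ vertex other than $z$ has two incident edges intersecting in a single vertex — every other vertex's incident edges overlap more, because the bulk of the hypergraph is the tight path plus $a$, whose edges share large intersections. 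This discrepancy in intersection patterns yields the contradiction.

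The main obstacle I anticipate is the case analysis over the degree-$4$ target vertices $w \in \{v_1, v_2, v_7, v_8, v_9\}$: for each one I must verify that its two incident edges intersect in more than just $w$, so that Observation~\ref{obs::monomorphisms}(d) is violated when matching against the pair $\{r,g\}$ with $|r \cap g| = 1$. I expect this to be a short but genuine inspection of Table~\ref{IntersectionTable} rather than a single clean argument, since the edges incident to these peripheral vertices are scattered among $r, g, a$ and the tight-path edges. Once every such $w$ is eliminated, only $\phi(z) = z$ remains, completing the proof.
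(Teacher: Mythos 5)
Your reduction is sound up to the final verification, but that verification fails: it is \emph{not} true that every degree-$4$ vertex of $\mathcal{H}$ other than $z$ has all pairs of incident edges meeting in at least two vertices. The vertex $v_9$ is a counterexample: it lies on the four edges $g$, $a$, $\{v_9,v_1,v_2,v_3,v_4\}$ and $\{v_5,v_6,v_7,v_8,v_9\}$, and the last two of these intersect precisely in $\{v_9\}$ (this is the entry $1$ in Table~\ref{IntersectionTable} for this pair). Hence the case $\phi(z)=v_9$ with $\{\phi(r),\phi(g)\}=\bigl\{\{v_9,v_1,v_2,v_3,v_4\},\{v_5,v_6,v_7,v_8,v_9\}\bigr\}$ survives your case analysis, and nothing in your proposal eliminates it; closing it would require genuinely new work (e.g.\ tracking where the degree-$7$ vertex $v_4\in g$ can be sent), so the proof as written has a gap.

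The paper runs the same degree-plus-intersection analysis but in the opposite direction, and that is exactly what saves it. When $\{e,f\}\cap\{r,g\}=\emptyset$, every vertex of $\mathcal{H}$ keeps positive degree after deleting $e,f$, so $V(\mathcal{H}_{ef})=V(\mathcal{H})$ and the injective map $\phi$ is a bijection; the paper therefore considers the \emph{preimage} $v=\phi^{-1}(z)$ rather than the image $\phi(z)$. For this vertex one gets more constraints than in your direction: $d_{\mathcal{H}}(v)=4$, \emph{both} deleted edges $e$ and $f$ must be incident with $v$, and the two surviving edges at $v$ must meet exactly in $\{v\}$. Consulting the tables again singles out $v=v_9$ with surviving edges $\{v_9,v_1,v_2,v_3,v_4\}$ and $\{v_5,\ldots,v_9\}$ --- but now this forces $\{e,f\}=\{g,a\}$, the other two edges at $v_9$, contradicting $\{e,f\}\cap\{r,g\}=\emptyset$. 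In the image direction there is no analogous constraint pinning down $\{e,f\}$, which is precisely why your argument cannot dispose of $v_9$. If you want to keep your set-up, the repair is to observe bijectivity and switch to the preimage of $z$ at the point where the table is consulted.
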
  

\begin{proof}
Assume first that $\{e,f\} \cap \{\e{r}, \e{g}\} \neq \emptyset$. Then $d_{\Hef}(\V{z}) \leq 1$. Combined with  Observation~\ref{obs::monomorphisms}(a), this implies that $d_{\mathcal H}(\phi(\V{z})) \leq 1 + |\{e, f\}| = 3$. Since $z$ is the only vertex of degree at most $3$ in $\mathcal H$, it follows that $\phi(\V{z}) = \V{z}$.

Assume then that $\{e,f\} \cap \{\e{r}, \e{g}\} = \emptyset$. Since $\phi$ is a monomorphism, there exists a vertex $v \in V(\mathcal H_{ef})$ such that $\phi(v) = \V{z}$. Suppose for a contradiction that $v \neq \V{z}$. By Observation~\ref{obs::monomorphisms}(a), we have $d_{\Hef}(v) \leq 2$ and thus $d_{\mathcal H}(v) \leq 4$. Since $\V{z}$ is the only vertex of degree less than $4$ in $\mathcal H$, it follows that $d_{\mathcal H}(v) = 4$ and that both $e$ and $f$ contain $v$. Let $r' = \phi^{-1}(\e{r})$ and $g' = \phi^{-1}(\e{g})$ be the other two edges of $\mathcal{H}$ that contain $v$. By Observation~\ref{obs::monomorphisms}(d), we have $|r' \cap g'| = |\e{r} \cap \e{g}| = 1$. Looking at Tables~\ref{DegreeTable} and~\ref{IntersectionTable}, we see that the only choice of $r', g'$ and $v$ such that $d_{\mathcal H}(v) = 4$ and $r' \cap g' = \{v\}$ is $v = \V{9}$ and $\{r', g'\} = \{\e{9,4}, \e{5,9}\}$. Since both $e$ and $f$ contain $v$ as well, this implies that $\{e, f\} = \{\e{g}, \e{a}\}$, contrary to our assumption that $\{e,f\} \cap \{\e{r}, \e{g}\} = \emptyset$.
\end{proof}

\begin{lemma} \label{rg}
Let $\phi : V(\mathcal{H}_{ef}) \rightarrow V(\mathcal{H})$ be a monomorphism.  If $\e{r}, \e{g} \in \mathcal {H}_{ef}$, $\phi(\e{r}) = \e{r}$ and $\phi(\e{g}) = \e{g}$, then $\phi$ is the identity.
\end{lemma}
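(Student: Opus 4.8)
The plan is to show that $\phi$ is a class-preserving permutation and then pin down the image of every vertex. First, since $\e{r},\e{g}\in\Hef$ and $\V{z}$ lies only on $\e{r}$ and $\e{g}$, the vertex $\V{z}$ still has degree $2$ in $\Hef$, while every other vertex, having degree at least $4$ in $\mathcal H$, retains degree at least $2$; hence no vertex is isolated, $V(\Hef)=V(\mathcal H)$, and $\phi$ is an injective self-map of a finite set, i.e.\ a permutation of $V(\mathcal H)$. By Lemma~\ref{zFixed}, $\phi(\V{z})=\V{z}$, and together with $\phi(\e{r})=\e{r}$, $\phi(\e{g})=\e{g}$ this forces $\phi$ to permute $\e{r}\setminus\{\V{z}\}=\{\V1,\V3,\V5,\V8\}$ and $\e{g}\setminus\{\V{z}\}=\{\V2,\V4,\V7,\V9\}$ among themselves; since $\V6$ is the unique vertex outside $\e{r}\cup\e{g}$ (Observation~\ref{obs::H5properties}(1)), also $\phi(\V6)=\V6$.

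Next I would pin individual vertices using degrees and $\e{r},\e{g}$-intersection signatures. By Observation~\ref{obs::monomorphisms}(a), $d_{\mathcal H}(\phi(\V4))\ge d_{\Hef}(\V4)\ge 7-2=5$, and $\V4$ is the only vertex of $\{\V2,\V4,\V7,\V9\}$ of degree at least $5$, so $\phi(\V4)=\V4$. For the remaining coordinates, note that for every surviving edge $S$ Observation~\ref{obs::monomorphisms}(d) gives $|\phi(S)\cap\e{r}|=|S\cap\e{r}|$ and $|\phi(S)\cap\e{g}|=|S\cap\e{g}|$; by the uniqueness in Observation~\ref{obs::H5properties}(2),(3) this means that $\phi(\e{1,5})=\e{1,5}$ whenever $\e{1,5}\in\Hef$ and $\phi(\e{9,4})=\e{9,4}$ whenever $\e{9,4}\in\Hef$. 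Intersecting these fixed edges with $\e{r}$ and $\e{g}$ and using the injectivity of $\phi$ (so $\phi(A\cap B)=\phi(A)\cap\phi(B)$) locates $\V5,\V7,\V8,\V9$ and leaves at most the transpositions $\V1\leftrightarrow\V3$ and $\V2\leftrightarrow\V4$; the latter is already excluded by $\phi(\V4)=\V4$.

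To break the residual $\V1\leftrightarrow\V3$ ambiguity I would use an ``image of an edge is an edge'' test. With $\V4,\V5,\V6,\V7$ fixed, any surviving edge meeting $\{\V1,\V3\}$ in a single vertex must map to an edge of the same shape; for example $\phi(\e{3,7})=\{\phi(\V3),\V4,\V5,\V6,\V7\}$, and since $\{\V1,\V4,\V5,\V6,\V7\}$ is not an edge of $\mathcal H$ we must have $\phi(\V3)=\V3$, hence $\phi(\V1)=\V1$. The edges $\e{a}$ and $\e{2,6}$ serve equally well as this test edge, since each meets $\{\V1,\V3\}$ in exactly one vertex and their relevant images are not edges of $\mathcal H$. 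At this point $\phi$ fixes every vertex and is therefore the identity; equivalently, $\phi$ now fixes every surviving edge setwise and Lemma~\ref{3edges} applies.

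The main obstacle is making this uniform over the choice of the two deleted edges $\{e,f\}\subseteq\{\e{a},\e{9,4},\e{1,5},\e{2,6},\e{3,7},\e{4,8},\e{5,9}\}$: the very edges used above as locators may be deleted, and because the five edges $\e{a},\e{2,6},\e{3,7},\e{4,8},\e{5,9}$ all share the intersection signature $(2,2)$ with $(\e{r},\e{g})$, they are not individually determined by that data, so one cannot simply fix all edges setwise at the start. This forces a short, finite case check organised by whether $\e{1,5}$ and $\e{9,4}$ survive: when one of them is deleted one re-derives the positions of $\V5,\V7,\V8,\V9$ from other surviving edges, and the redundancy noted above (three different edges each break $\V1\leftrightarrow\V3$, while $\phi(\V4)=\V4$ and $\phi(\V6)=\V6$ hold in every case) guarantees that for each deletion pattern a suitable locator always survives. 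Verifying this for the finitely many patterns is the only genuine work.
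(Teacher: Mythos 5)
Your opening matches the paper's proof almost step for step: $\phi(\V{z})=\V{z}$ by Lemma~\ref{zFixed}, $\phi(\V6)=\V6$ by Observation~\ref{obs::H5properties}(1), $\phi(\V4)=\V4$ from the degree bound together with $\phi(\V4)\in\e{g}$, and $\phi(\e{1,5})=\e{1,5}$, $\phi(\e{9,4})=\e{9,4}$ whenever these edges survive, via Observations~\ref{obs::monomorphisms}(d) and~\ref{obs::H5properties}(2),(3). Your treatment of the case where both $\e{1,5}$ and $\e{9,4}$ survive is correct and complete: the two fixed edges pin $\V2,\V5,\V7,\V8,\V9$, and since at most two of $\e{a},\e{2,6},\e{3,7}$ are deleted, a surviving one of them kills the residual transposition $\V1\leftrightarrow\V3$ by the edge-image test. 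The genuine gap is exactly what you flag in your last sentence: the finite case check over deletion patterns, which you yourself call ``the only genuine work,'' is asserted rather than carried out, and your description of how it would go does not survive scrutiny in the hardest case.

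Concretely, the pattern $\{e,f\}=\{\e{1,5},\e{9,4}\}$ is not covered by your plan (``when one of them is deleted one re-derives the positions of $\V5,\V7,\V8,\V9$ from other surviving edges''): in that case there are no locator edges at all, since every surviving edge other than $\e{r},\e{g}$ has the same intersection signature $(2,2)$ with $(\e{r},\e{g})$, so nothing is pinned by the mechanism you rely on. After fixing $\V{z},\V4,\V6$, the residual ambiguity is the full group of permutations preserving $\{\V1,\V3,\V5,\V8\}$ and $\{\V2,\V7,\V9\}$, a group of order $144$, not merely the transpositions your ``redundancy'' addresses. The case is closable, but by a different idea: the five surviving edges $\e{a},\e{2,6},\e{3,7},\e{4,8},\e{5,9}$ have pairwise-distinct multisets of mutual intersection sizes (Table~\ref{IntersectionTable}), so by Observation~\ref{obs::monomorphisms}(d) the permutation $\phi$ induces on them must fix each one setwise, and Lemma~\ref{3edges} finishes. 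This is in fact the paper's strategy throughout: it never chases vertices, but instead, in three exhaustive cases ($\e{9,4},\e{3,7}$ both surviving; $\e{2,6},\e{5,9}$ both surviving; $\{e,f\}\in\{\e{9,4},\e{3,7}\}\times\{\e{2,6},\e{5,9}\}$), exhibits surviving edges whose intersection signatures separate all edges of $\mathcal{H}\setminus\{\e{r},\e{g}\}$, concludes that every surviving edge is fixed setwise, and applies Lemma~\ref{3edges}. Your single-deletion cases also need real enumeration (the residual groups there have orders $8$ and $12$, and the edge-image test is not automatic: for instance $(\V1\V3)(\V5\V8)(\V2\V9)$ maps $\e{a}$ to $\e{2,6}$ and vice versa, so those two test edges pass it); that enumeration does in fact succeed, but nothing in your write-up establishes it.
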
   

\begin{proof}
Since $\phi$ is injective, $\phi(\e{r}) = \e{r}$, and $\phi(\e{g}) = \e{g}$, it follows by Observation~\ref{obs::H5properties} (1) that $\phi(\V{6}) = \V{6}$.

By Observation~\ref{obs::monomorphisms}(a), we have that $d_{\mathcal H}(\phi(\V{4})) \geq d_{\Hef}(\V{4}) \geq 5$ which in turn implies that $\phi(\V{4}) \in \{\V{3}, \V{4}, \V{5}, \V{6}\}$. Since, moreover, $\phi(\V{4}) \in \phi(\e{g}) = \e{g} = \{\V{z}, \V{2}, \V{4}, \V{7}, \V{9}\}$, it follows that $\phi(\V{4}) = \V{4}$. Since $\e{5,9}$ is the unique edge in $\mathcal H$ containing $\V{6}$ but not $\V{4}$, we have that if $\e{5,9} \in \Hef$, then $\phi(\e{5,9}) = \e{5,9}$.

Since $\phi(\e{r}) = \e{r}$ and $\phi(\e{g}) = \e{g}$, it follows by Observation~\ref{obs::monomorphisms}(d) and by Observation~\ref{obs::H5properties}(2), that if $\e{1,5} \in \Hef$, then $\phi(\e{1,5})=\e{1,5}$. Similarly, using Observation~\ref{obs::H5properties}(3), it follows that if $\e{9,4} \in \Hef$, then $\phi(\e{9,4}) = \e{9,4}$. We distinguish between the following three cases.

\textbf{Case 1: $\e{9,4}, \e{3,7} \in \Hef$}. As noted above $\phi(\e{9,4}) = \e{9,4}$. Since $|\e{3,7} \cap \e{9,4}| = 2$, Observation~\ref{obs::monomorphisms}(d) and Table~\ref{IntersectionTable} imply that $\phi(\e{3,7}) \in \{\e{3,7}, \e{r}\}$. Since, moreover, $\phi(\e{r}) = \e{r}$ by assumption, we conclude that $\phi(\e{3,7}) = \e{3,7}$. Observation~\ref{obs::monomorphisms}(d) then  implies that $(|\phi(x) \cap \e{9,4}|, |\phi(x) \cap \e{3,7}|) = (|x \cap \e{9,4}|, |x \cap \e{3,7}|)$ for every edge $x \in \Hef$. Looking at the rows corresponding to $\e{9,4}$ and $\e{3,7}$ in Table~\ref{IntersectionTable}, we see that the pair $(|x \cap \e{9,4}|, |x \cap \e{3,7}|)$ is distinct for every $x \in \mathcal H \setminus \{\e{r}, \e{g}\}$. It follows that $\phi(x) = x$ for every $x \in \Hef$. Hence, $\phi$ is the identity by Lemma~\ref{3edges}.

\textbf{Case 2: $\e{2,6}, \e{5,9} \in \Hef$}. As noted above $\phi(\e{5,9}) = \e{5,9}$. Since $|\e{2,6} \cap \e{5,9}| = 2$, Observation~\ref{obs::monomorphisms}(d) and Table~\ref{IntersectionTable} imply that $\phi(\e{2,6}) \in \{\e{2,6}, \e{r}, \e{g}\}$. Since, moreover, $\phi(\e{r}) = \e{r}$ and $\phi(\e{g}) = \e{g}$ by assumption, we conclude that $\phi(\e{2,6}) = \e{2,6}$. Observation~\ref{obs::monomorphisms}(d) then implies that $(|\phi(x) \cap \e{5,9}|, |\phi(x) \cap \e{2,6}|) = (|x \cap \e{5,9}|, |x \cap \e{2,6}|)$ for every edge $x \in \Hef$. Looking at the rows corresponding to $\e{5,9}$ and $\e{2,6}$ in Table~\ref{IntersectionTable}, we see that the pair $(|x \cap \e{5,9}|, |x \cap \e{2,6}|)$ is distinct for every $x \in \mathcal H \setminus \{\e{r}, \e{g}\}$. It follows that $\phi(x) = x$ for every $x \in \Hef$. Hence, $\phi$ is the identity by Lemma~\ref{3edges}.

\textbf{Case 3: $\{e, f\} \in \{\e{9,4}, \e{3,7}\} \times \{\e{2,6}, \e{5,9}\}$}. Observe that $\e{1,5} \in \Hef$ and thus, as noted above, $\phi(\e{1,5}) = \e{1,5}$. Looking at the row corresponding to $\e{1,5}$ in Table~\ref{IntersectionTable} and using Observation~\ref{obs::monomorphisms}(d), we infer that $\phi(\e{3,7}) = \e{3,7}$, $\phi(\e{5,9}) = \e{5,9}$, $\{\phi(\e{9,4}), \phi(\e{2,6})\} = \{\e{9,4}, \e{2,6}\}$, and $\{\phi(\e{a}), \phi(\e{4,8})\} = \{\e{a}, \e{4,8}\}$. Since $\phi(\V{6}) = \V{6}$, it then follows that $\phi(\e{2,6}) = \e{2,6}$ and thus $\phi(\e{9,4}) = \e{9,4}$. Let $x$ denote the unique edge of $\{\e{2,6}, \e{5,9}\} \cap \Hef$. Looking at the row corresponding to $x$ in Table~\ref{IntersectionTable}, we see that $|x \cap \e{a}| \neq |x \cap \e{4,8}|$. Using Observation~\ref{obs::monomorphisms}(d), we conclude that $\phi(\e{a}) = \e{a}$ and $\phi(\e{4,8}) = \e{4,8}$. Hence, $\phi$ is the identity by Lemma~\ref{3edges}.     

Since, clearly, at least one of the above three cases must occur, this concludes the proof of the lemma.
\end{proof}

\begin{lemma} \label{FixV9}
Let $\phi : V(\mathcal{H}_{ef}) \rightarrow V(\mathcal{H})$ be a monomorphism. If $\e{9,4}, \e{5,9} \in \mathcal H_{ef}$, then $\phi(\V{9}) = \V{9}$.
\end{lemma}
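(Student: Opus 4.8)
The plan is to localise $\phi(\V9)$ by exploiting that $\V9$ is the unique vertex lying in both $\e{9,4}$ and $\e{5,9}$, the two edges we are told survive in $\Hef$. Since $\phi$ is a monomorphism, Observation~\ref{obs::monomorphisms}(d) gives $|\phi(\e{9,4}) \cap \phi(\e{5,9})| = |\e{9,4} \cap \e{5,9}| = 1$, and by injectivity $\phi(\V9)$ must be the single vertex of $\phi(\e{9,4}) \cap \phi(\e{5,9})$. First I would read off from Table~\ref{IntersectionTable} every pair of edges of $\mathcal H$ meeting in exactly one vertex, namely $\{\e{r},\e{g}\}$, $\{\e{9,4},\e{4,8}\}$, $\{\e{9,4},\e{5,9}\}$ and $\{\e{1,5},\e{5,9}\}$, and compute their unique common vertices $\V z$, $\V4$, $\V9$ and $\V5$ respectively. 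Hence $\phi(\V9) \in \{\V z, \V4, \V5, \V9\}$, and it remains to eliminate the three spurious options.

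Eliminating $\V z$ and $\V4$ is quick. By Lemma~\ref{zFixed} we have $\phi(\V z) = \V z$, so injectivity rules out $\phi(\V9) = \V z$. For $\V4$, note from Table~\ref{DegreeTable} that $d_{\mathcal H}(\V4) = 7$ while $d_{\Hef}(\V9) \le d_{\mathcal H}(\V9) = 4$; since $|E(\mathcal H) \setminus E(\Hef)| = 2$, the lower bound in Observation~\ref{obs::monomorphisms}(a) would force $d_{\Hef}(\V9) \ge 7 - 2 = 5$, a contradiction, so $\phi(\V9) \ne \V4$.

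The hard case, and the main obstacle, is ruling out $\phi(\V9) = \V5$, where the degree bound is tight. Since $d_{\mathcal H}(\V5) = 6$, Observation~\ref{obs::monomorphisms}(a) gives $d_{\Hef}(\V9) \ge 6 - 2 = 4$, which together with $d_{\Hef}(\V9) \le 4$ forces all four edges $\e{g}, \e{a}, \e{9,4}, \e{5,9}$ incident with $\V9$ to survive; equivalently $e, f \in \{\e{r}, \e{1,5}, \e{2,6}, \e{3,7}, \e{4,8}\}$. I would then pin down $\phi$ on these four edges. As $\phi(\e{g})$ must contain $\phi(\V z) = \V z$ and $\phi(\V9) = \V5$, and $\e{r}$ is the only edge of $\mathcal H$ containing both, we get $\phi(\e{g}) = \e{r}$; the intersection sizes $|\e{g} \cap \e{9,4}| = 3$ and $|\e{g} \cap \e{5,9}| = 2$ then force $\phi(\e{9,4}) = \e{1,5}$ and $\phi(\e{5,9}) = \e{5,9}$ via Observation~\ref{obs::monomorphisms}(d), and the analogous constraints from $\e{a}$ force $\phi(\e{a}) = \e{3,7}$.

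Because $\e{g} \cup \e{a} \cup \e{9,4} \cup \e{5,9} = V(\mathcal H)$, these four images determine the vertex bijection $\phi$ completely up to swapping $\V6$ and $\V8$. The contradiction comes by computing the images of the remaining edges: under the resulting map each of $\e{2,6}, \e{3,7}, \e{4,8}$ is sent to a five-set that is not an edge of $\mathcal H$. Since only two edges are deleted in forming $\Hef$, at least one of these three edges survives in $\Hef$, and $\phi$ sending a surviving edge to a non-edge contradicts $\phi$ being a monomorphism. This step — reconstructing the forced vertex bijection and then verifying that the surviving edges are mapped off the edge set — is the delicate part of the argument; everything preceding it is a short lookup in Tables~\ref{DegreeTable} and~\ref{IntersectionTable}. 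We conclude that $\phi(\V9) = \V9$.
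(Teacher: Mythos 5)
Your proof is correct, and its first half coincides with the paper's: both arguments use Observation~\ref{obs::monomorphisms}(d) to confine $\phi(\V9)$ to the four vertices $\V{z},\V4,\V5,\V9$ that arise as one-point intersections of edge pairs, eliminate $\V{z}$ via Lemma~\ref{zFixed} and $\V4$ via the degree bound, and in the remaining case $\phi(\V9)=\V5$ deduce $d_{\Hef}(\V9)=4$, hence $\e{g},\e{a}\in\Hef$, and then the forced images $\phi(\e{g})=\e{r}$, $\phi(\e{9,4})=\e{1,5}$, $\phi(\e{5,9})=\e{5,9}$. From that point on the two endgames genuinely differ. The paper proves that $\e{2,6}$ and $\e{4,8}$ lie outside both $\Hef$ and $\phi(\Hef)$, concludes that $\phi$ is an automorphism of $\mathcal{F}=\mathcal{H}\setminus\{\e{2,6},\e{4,8}\}$, and finishes with a one-line degree count: $d_{\mathcal{F}}(\phi(\V4))$ would have to equal $d_{\mathcal{F}}(\V4)=5$, yet $\phi(\V4)\in\e{r}$ and every vertex of $\e{r}$ has $\mathcal{F}$-degree at most $4$. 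You instead pin down the additional image $\phi(\e{a})=\e{3,7}$, which the paper never needs and which is indeed forced (by Observation~\ref{obs::monomorphisms}(d) and Table~\ref{IntersectionTable}, $\e{3,7}$ is the unique edge meeting $\e{1,5}$ in three vertices and $\e{5,9}$ in three vertices), then use $\e{g}\cup\e{a}\cup\e{9,4}\cup\e{5,9}=V(\mathcal{H})$ to reconstruct the vertex map and show that any surviving edge among $\e{2,6},\e{3,7},\e{4,8}$ is sent to a non-edge. I checked the two steps you flag as delicate, and they do hold: the four edge images force $\V1\mapsto\V4$, $\V2\mapsto\V1$, $\V3\mapsto\V2$, $\V4\mapsto\V3$, $\V5\mapsto\V9$, $\V7\mapsto\V8$, $\V9\mapsto\V5$ with $\{\phi(\V6),\phi(\V8)\}=\{\V6,\V7\}$, and the images of $\e{2,6},\e{3,7},\e{4,8}$ are non-edges under \emph{both} resolutions of the $\V6/\V8$ ambiguity (a complete write-up must verify both choices, not just ``the resulting map''). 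The trade-off: the paper's automorphism-plus-degree trick is shorter and avoids reconstructing $\phi$ explicitly, but it needs the symmetric claim about $\phi(\Hef)$; your route costs more table work, yet it is self-contained past the forced edge images and exhibits concretely why no such monomorphism can exist.
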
   

\begin{proof}
Suppose for a contradiction that $\phi(\V{9}) \neq \V{9}$. By Lemma~\ref{zFixed} we have $\phi(\V{z})=\V{z}$ which implies that $\phi(\V{9})\neq \V{z}$. By Observation~\ref{obs::monomorphisms}(a) we have $d_{\mathcal H}(\phi(\V{9}))\leq d_{\Hef}(\V{9}) + 2 \leq 6$ which implies that $\phi(\V{9}) \neq \V{4}$. 

Since $\phi$ is a monomorphism, we have $\{\phi(\V{9})\} = \phi(\e{9,4}\cap \e{5,9}) = \phi(\e{9,4}) \cap \phi(\e{5,9})$. Since $\{\phi(\V{9})\}$ is the intersection of two edges, we must have $\phi(\V{9}) \in \{\V{4}, \V{5}, \V{9}, \V{z}\}$. Combining this with  the previous paragraph, we infer that $\phi(\V{9}) = \V{5}$.    

Note that $6 = d_{\mathcal H}(\V{5}) = d_{\mathcal H}(\phi(\V{9})) \leq d_{\Hef}(\V{9}) + 2$. Hence, $d_{\Hef}(\V{9}) = 4$ which implies that $\{\e{g}, \e{a}\} \cap \{e, f\} = \emptyset$. Since $\phi(\V{z}) = \V{z}$ and $\phi(\V{9}) = \V{5}$, we must have $\phi(\e{g}) = \e{r}$.

Since $\e{1,5}, \e{5,9}$ is the unique pair of edges satisfying $\e{1,5} \cap \e{5,9} = \{\V{5}\}$, it follows that $\{\phi(\e{9,4}), \phi(\e{5,9})\} = \{\e{1,5}, \e{5,9}\}$. Suppose for a contradiction that $\phi(\e{9,4}) = \e{5,9}$. Then, by Observation~\ref{obs::monomorphisms}(d) we have $3 = |\e{9,4} \cap \e{g}| = |\phi(\e{9,4}) \cap \phi(\e{g})| = |\e{5,9} \cap \e{r}| = 2$. We conclude that $\phi(\e{9,4}) = \e{1,5}$ and $\phi(\e{5,9}) = \e{5,9}$. We can now determine the missing edges in $\Hef$ and in $\phi(\Hef)$.

\begin{claim}\label{Claime26e48}
$\e{2,6}, \e{4,8} \not \in \Hef$ and $\e{2,6}, \e{4,8} \not \in \phi(\Hef)$.
\end{claim}

\begin{proof}
Suppose for a contradiction that $\e{2,6} \in \Hef$. Since $|\e{2,6} \cap \e{4,9}| = 3$ and $\V{9} \notin \e{2,6}$, it follows by Observation~\ref{obs::monomorphisms}(d) that $|\phi(\e{2,6}) \cap \e{1,5}| = |\phi(\e{2,6}) \cap \phi(\e{4,9})| = 3$ and $\V{5} \notin \phi(\e{2,6})$. This is a contradiction since there is no edge $x \in \mathcal{H}$ such that $|x \cap \e{1,5}| = 3$ and $\V{5} \notin x$.

Suppose for a contradiction that $\e{4,8} \in \Hef$. It follows by Observation~\ref{obs::monomorphisms}(d) that $4 = |\e{4,8} \cap \e{5,9}| = |\phi(\e{4,8}) \cap \phi(\e{5,9})| = |\phi(\e{4,8}) \cap \e{5,9}|$ and thus $\phi(\e{4,8}) = \e{4,8}$. Since, moreover, $\V9 \notin \e{4,8}$ and $\phi(\V{9}) = \V{5}$, it follows that $\V5 \notin \e{4,8}$, contrary to the definition of $\e{4,8}$.    

Suppose for a contradiction that $\e{2,6} \in \phi(\Hef)$. Let $x \in \Hef$ be such that $\phi(x) = \e{2,6}$. Since $\phi(\e{9,4}) = \e{1,5}$, it follows by Observation~\ref{obs::monomorphisms}(d) that $4 = |\e{1,5} \cap \e{2,6}| = |\e{9,4} \cap x|$. Looking at the row corresponding to $\e{9,4}$ in Table~\ref{IntersectionTable}, we infer that $x = \e{1,5}$. However, since $\V9 \notin \e{1,5}$, we then deduce that $\V5 = \phi(\V9) \notin \e{2,6}$ which is clearly a contradiction.  

Suppose for a contradiction that $\e{4,8} \in \phi(\Hef)$. Let $x \in \Hef$ be such that $\phi(x) = \e{4,8}$. Since $\phi(\e{5,9}) = \e{5,9}$, it follows by Observation~\ref{obs::monomorphisms}(d) that $4 = |\e{4,8} \cap \e{5,9}| = |x \cap \e{5,9}|$. Looking at the row corresponding to $\e{5,9}$ in Table~\ref{IntersectionTable}, we infer that $x = \e{4,8}$. However, we already saw before that assuming $\e{4,8} \in \Hef$ results in a contradiction. 
\end{proof}  

We are now in a position to complete the proof of Lemma~\ref{FixV9}. Let $\mathcal{F} = \mathcal{H} \setminus \{\e{2,6}, \e{4,8}\}$. It follows from Claim~\ref{Claime26e48} that $\Hef = \phi(\Hef) = \mathcal{F}$ and that $\phi$ is an automorphism of $\mathcal{F}$. Hence, in particular, $d_{\mathcal{F}}(\phi(\V4)) = d_{\mathcal{F}}(\V4) = 5$. On the other hand, since $\phi(\e{g}) = \e{r}$, it follows that $\phi(v_4) \in \{\V1, \V3, \V5, \V8\}$. Therefore $d_{\mathcal{F}}(\phi(\V4)) \leq 4$ which is clearly a contradiction.      
\end{proof}

\begin{lemma}\label{TightPathPresent}
Let $\phi : V(\mathcal{H}_{ef}) \rightarrow V(\mathcal{H})$ be a monomorphism. Suppose that $\Hef$ contains a tight path of length $5$. Then $\phi$ is either the identity or one of
$(\V9\V1\V2\V3\V4\V5\V6\V7\V8)(\V{z})$, 
$(\V1\V9\V8\V7\V6\V5\V4\V3\V2)(\V{z})$,
$(\V9\V8)(\V1\V7)(\V2\V6)(\V3\V5)(\V4)(\V{z})$,
$(\V1\V8)(\V2\V7)(\V3\V6)(\V4\V5)(\V9)(\V{z})$, and
$(\V1\V9)(\V2\V8)(\V3\V7)(\V4\V6)(\V5)(\V{z})$.
\end{lemma}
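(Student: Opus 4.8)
The plan is to exploit the two tight paths of length five identified in Observation~\ref{obs::H5properties}(4) together with the rigidity statements from Observation~\ref{obs::monomorphisms}(b,c). Let $P$ be a tight path of length five contained in $\Hef$. By Observation~\ref{obs::monomorphisms}(b), $\phi(P)$ is again a tight path of length five in $\mathcal{H}$, so by Observation~\ref{obs::H5properties}(4) we have $\phi(P) \in \{TP_1, TP_2\}$. The key point is that a tight path of length five in a $5$-graph spans $9 = |V(\mathcal{H})|$ vertices, so $V(P) = V(\phi(P)) = V(\mathcal{H})$ and $\phi$ is in fact a bijection on $V(\mathcal{H})$, i.e.\ an automorphism of $\mathcal{H}$ (once we note that $\phi$ restricted to the full vertex set is a permutation that preserves all edges, which will follow from the case analysis below). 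Thus the entire problem reduces to determining how the ordered vertex sequence of $P$ can map onto the ordered vertex sequence of $TP_1$ or $TP_2$.

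Next I would use Observation~\ref{obs::monomorphisms}(c), which says that along a tight path the image sequence is determined up to global reversal: writing $P$ and $\phi(P)$ as vertex sequences, $\phi$ either matches them head-to-head or head-to-tail. Since $\Hef$ contains a tight path of length five and $\mathcal{H}$ contains exactly the two such paths $TP_1$ and $TP_2$, I would split into cases according to which tight path lies in $\Hef$ (determined by which pair $\{e,f\}$ is deleted) and which of $TP_1, TP_2$ is its image, and then according to the two orientations. Each of these finitely many combinations forces $\phi$ on the $9$ path vertices (with $\V z$ fixed by Lemma~\ref{zFixed}), and I expect these to produce exactly the identity together with the four nontrivial permutations listed: the ``shift'' $(\V9\V1\V2\V3\V4\V5\V6\V7\V8)$ and its inverse come from mapping $TP_1$ to $TP_2$ preserving orientation, while the three involutions come from the orientation-reversing cases, which reflect the path about its centre.

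The main verification is that each candidate permutation actually \emph{extends to a monomorphism} (equivalently, respects the three edges $\e r, \e g, \e a$ outside the tight path), not merely that it respects the path. Here I would lean on Observation~\ref{obs::monomorphisms}(d) and the intersection-size data in Table~\ref{IntersectionTable}: a permutation obtained from the tight-path analysis must send each remaining edge of $\Hef$ to an edge of $\mathcal{H}$ with all the correct pairwise intersection sizes, and this is exactly what rules out the spurious orientations and pins down the listed five maps. In particular, the degree constraint from Observation~\ref{obs::monomorphisms}(a) (forcing $\phi(\V4)$ to land on a high-degree vertex) and the already-established $\phi(\V z) = \V z$ eliminate reversals that would move $\V4$ off the set $\{\V3,\V4,\V5,\V6\}$.

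The hard part will be bookkeeping: because $\Hef$ is $\mathcal{H}$ with two edges removed, which of $TP_1, TP_2$ survives depends on $\{e,f\}$, and I must be careful that a surviving length-five tight path genuinely exists in $\Hef$ (the hypothesis guarantees this, so I need not enumerate all $\{e,f\}$, only reason about an arbitrary surviving path). The subtle step is ensuring completeness of the list, i.e.\ that no orientation or target assignment has been overlooked and that each retained map is consistent on \emph{all} of $\Hef$, not just on the path; this is where Table~\ref{IntersectionTable} does the decisive work, since each image edge is uniquely identified by its intersection pattern with the fixed path edges, just as in the case analysis of Lemma~\ref{rg}.
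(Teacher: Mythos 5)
Your first two paragraphs are, in substance, exactly the paper's proof, and they already complete the argument: Lemma~\ref{zFixed} fixes $\V{z}$; Observation~\ref{obs::H5properties}(4) forces the surviving tight path, and (via Observation~\ref{obs::monomorphisms}(b)) its image, to lie in $\{TP_1, TP_2\}$; and Observation~\ref{obs::monomorphisms}(c) determines $\phi$ on the nine path vertices up to reversal. The resulting case analysis (which path survives, which path is its image, which orientation) outputs \emph{precisely} the six maps in the statement, with no extraneous candidates, so the lemma is proved at that point and nothing further is required.

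Your third paragraph, however, rests on a misconception, and carrying it out would have collided with your own expectations. The lemma is purely a necessary condition on $\phi$: it does \emph{not} assert that the listed permutations extend to monomorphisms, and in fact none of the non-identity ones does (this is ultimately the content of Lemma~\ref{Property4proof}). For instance, $(\V1\V8)(\V2\V7)(\V3\V6)(\V4\V5)(\V9)(\V{z})$ sends $\e{r}$ to $\{\V{z},\V1,\V4,\V6,\V8\}$, $\e{g}$ to $\{\V{z},\V2,\V5,\V7,\V9\}$ and $\e{a}$ to $\{\V1,\V3,\V5,\V8,\V9\}$, none of which is an edge of $\mathcal{H}$; whether this yields a contradiction depends on which of $\e{r}, \e{g}, \e{a}$ happen to lie in $\{e,f\}$, and arguments of exactly this kind are what the \emph{later} Lemmas~\ref{94and59} and~\ref{94or59} carry out under their additional hypotheses about $\{e,f\}$. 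So your proposed ``main verification'' would not pin down the listed five maps --- it would eliminate all of them (proving a statement stronger than the lemma, at the cost of the enumeration over $\{e,f\}$ you hoped to avoid); the paper instead keeps this lemma as a weak, purely path-theoretic statement so that it can be reused later with varying information about the deleted edges. Two smaller slips in the same spirit: a tight path of length $5$ spans $9$ vertices but $|V(\mathcal{H})| = 10$ --- the path misses $\V{z}$, which is exactly why Lemma~\ref{zFixed} is needed, and $\phi$ is in general not an automorphism of $\mathcal{H}$; and the degree bound from Observation~\ref{obs::monomorphisms}(a) eliminates nothing here, since every one of the six candidate maps already keeps $\V4$ inside $\{\V3,\V4,\V5,\V6\}$.
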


\begin{proof}
By Lemma~\ref{zFixed} we know that $\phi(\V{z}) = \V{z}$. Moreover, by Observation~\ref{obs::H5properties}(4), we know that $\Hef$ contains $TP1$ or $TP2$. Moreover, by Observation~\ref{obs::monomorphisms}(b), if $TP1 \in \Hef$, then $\phi(TP1) \in \{TP1, TP2\}$ and if $TP2 \in \Hef$, then $\phi(TP2) \in \{TP1, TP2\}$. Accordingly, we distinguish between the following four cases.

\textbf{Case 1: $TP1 \in \Hef$ and $\phi(TP1) = TP1$}. It follows by Observation~\ref{obs::monomorphisms}(c) that either $\phi$ is the identity or $\phi = (\V1\V9)(\V2\V8)(\V3\V7)(\V4\V6)(\V5)(\V{z})$.

\textbf{Case 2: $TP1 \in \Hef$ and $\phi(TP1) = TP2$}. It follows by Observation~\ref{obs::monomorphisms}(c) that either $\phi = (\V1\V9\V8\V7\V6\V5\V4\V3\V2)(\V{z})$ or $\phi = (\V1\V8)(\V2\V7)(\V3\V6)(\V4\V5)(\V9)(\V{z})$.

\textbf{Case 3: $TP2 \in \Hef$ and $\phi(TP2) = TP1$}. It follows by Observation~\ref{obs::monomorphisms}(c) that either $\phi = (\V9\V1\V2\V3\V4\V5\V6\V7\V8)(\V{z})$ or $\phi = (\V1\V8)(\V2\V7)(\V3\V6)(\V4\V5)(\V9)(\V{z})$.

\textbf{Case 4: $TP2 \in \Hef$ and $\phi(TP2) = TP2$}. It follows by Observation~\ref{obs::monomorphisms}(c) that either $\phi$ is the identity or $\phi = (\V8\V9)(\V1\V7)(\V2\V6)(\V3\V5)(\V4)(\V{z})$.
\end{proof}

\begin{lemma} \label{94and59}
Let $\phi : V(\mathcal{H}_{ef}) \rightarrow V(\mathcal{H})$ be a monomorphism. If $\e{9,4}, \e{5,9}\in \mathcal H_{ef}$, then $\phi$ is the identity.
\end{lemma}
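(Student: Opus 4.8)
The plan is to first pin down how $\phi$ acts on the vertices $\V{z}, \V9$ and on the two edges $\e{9,4}, \e{5,9}$, and then to branch according to whether $\Hef$ still contains a tight path of length five. By Lemma~\ref{zFixed} we have $\phi(\V{z}) = \V{z}$, and since $\e{9,4}, \e{5,9} \in \Hef$, Lemma~\ref{FixV9} gives $\phi(\V9) = \V9$. As $\V9 \in \e{9,4} \cap \e{5,9}$ and $|\phi(\e{9,4}) \cap \phi(\e{5,9})| = |\e{9,4} \cap \e{5,9}| = 1$ by Observation~\ref{obs::monomorphisms}(d), the edges $\phi(\e{9,4})$ and $\phi(\e{5,9})$ both contain $\V9$ and meet in exactly $\{\V9\}$. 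Inspecting the four edges incident with $\V9$ in Table~\ref{IntersectionTable}, the only such pair meeting in a single vertex is $\{\e{9,4}, \e{5,9}\}$, so $\{\phi(\e{9,4}), \phi(\e{5,9})\} = \{\e{9,4}, \e{5,9}\}$.

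\textbf{Tight path present.} Suppose $\{e,f\} \subseteq \{\e{r}, \e{g}, \e{a}\}$, so that $\Hef$ contains a tight path of length five. By Lemma~\ref{TightPathPresent} together with $\phi(\V9)=\V9$, the map $\phi$ is either the identity or $\sigma := (\V1\V8)(\V2\V7)(\V3\V6)(\V4\V5)(\V9)(\V{z})$, the unique listed non-identity map fixing $\V9$. A direct check shows that $\sigma$ sends each of $\e{r}, \e{g}, \e{a}$ to a non-edge of $\mathcal{H}$; since exactly one of these three edges survives in $\Hef$, we get $\phi \neq \sigma$, hence $\phi$ is the identity.

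\textbf{No tight path.} Otherwise $\{e,f\}$ meets $\{\e{1,5}, \e{2,6}, \e{3,7}, \e{4,8}\}$. If $\phi(\e{9,4})=\e{9,4}$ and $\phi(\e{5,9})=\e{5,9}$, then by Observation~\ref{obs::monomorphisms}(d) every $x \in \Hef$ satisfies $(|\phi(x)\cap\e{9,4}|, |\phi(x)\cap\e{5,9}|) = (|x\cap\e{9,4}|, |x\cap\e{5,9}|)$; reading the two relevant columns of Table~\ref{IntersectionTable}, this pair determines $x$ uniquely except that $\e{g}$ and $\e{2,6}$ share the value $(3,2)$, an ambiguity resolved via $\phi(\V{z})=\V{z}$ and $\V{z}\in\e{g}\setminus\e{2,6}$. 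Thus $\phi$ fixes every edge of $\Hef$ and is the identity by Lemma~\ref{3edges}. In the remaining swap case $\phi(\e{9,4})=\e{5,9}$, $\phi(\e{5,9})=\e{9,4}$, the swapped pattern would force $\phi(\e{g})=\e{3,7}$ whenever $\e{g}\in\Hef$, contradicting $\V{z}\in\e{g}$ and $\phi(\V{z})=\V{z}\notin\e{3,7}$; hence $\e{g}\in\{e,f\}$, and since we are in the no-tight-path case the other removed edge is a path edge, so $\e{r}$ and $\e{a}$ survive with $\phi(\e{r})=\e{r}$ and $\phi(\e{a})=\e{a}$ (their patterns $(2,2)$ and $(3,3)$ are symmetric and unique). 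Using these together with the swap, $\phi(\V{z})=\V{z}$, $\phi(\V9)=\V9$ and injectivity, one computes that $\phi$ must be the involution $(\V1\V8)(\V2\V7)(\V3\V5)(\V4\V6)(\V9)(\V{z})$, which sends both $\e{1,5}$ and $\e{4,8}$ to non-edges. As at least one of $\e{1,5}, \e{4,8}$ survives in $\Hef$, this is a contradiction.

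\textbf{Main obstacle.} The delicate part is the swap subcase of the no-tight-path case: the intersection pattern alone is consistent with the swap, so one genuinely has to reconstruct $\phi$ vertex by vertex from the fixed edges $\e{r}, \e{a}$ and the swapped pair $\{\e{9,4},\e{5,9}\}$, and only afterwards exhibit a surviving edge whose image fails to be an edge of $\mathcal{H}$. Getting the bookkeeping right---in particular first forcing $\e{g}$ to be deleted, and hence guaranteeing that $\e{r}$, $\e{a}$ and one of $\e{1,5}, \e{4,8}$ are all present---is where the argument could most easily go wrong.
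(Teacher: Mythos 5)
Your proof is correct and follows essentially the same approach as the paper's: fix $\V{z}$ and $\V9$ (Lemmas~\ref{zFixed} and~\ref{FixV9}), deduce $\{\phi(\e{9,4}),\phi(\e{5,9})\}=\{\e{9,4},\e{5,9}\}$, dispose of the case $\{e,f\}\subseteq\{\e{r},\e{g},\e{a}\}$ via Lemma~\ref{TightPathPresent}, fix every edge via Table~\ref{IntersectionTable} and Lemma~\ref{3edges} when $\e{9,4},\e{5,9}$ are fixed, and in the swap case force $\e{g}\in\{e,f\}$, reconstruct $\phi=(\V1\V8)(\V2\V7)(\V3\V5)(\V4\V6)(\V9)(\V{z})$, and obtain the contradiction from the surviving edge among $\e{1,5},\e{4,8}$ --- exactly the paper's structure. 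The only deviations are cosmetic and valid: you resolve the $\e{g}$/$\e{2,6}$ ambiguity using $\phi(\V{z})=\V{z}$ rather than the triple-intersection condition, and you exclude $\e{g}\in\Hef$ in the swap case by noting the swapped pattern would force $\phi(\e{g})=\e{3,7}$, whereas the paper instead derives $\phi(\e{g})=\e{g}$ and hence $\phi(\e{9,4})=\e{9,4}$, contradicting the swap.
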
  

\begin{proof}
Suppose for a contradiction that $\phi$ is not the identity. By Lemma~\ref{zFixed} we know that $\phi(\V{z}) = \V{z}$ and by Lemma~\ref{FixV9} we know that $\phi(\V{9}) = \V{9}$. Assume first that $\phi(\e{4,9}) = \e{4,9}$. Since $\phi(\V{9}) = \V{9}$, $\phi(\e{4,9}) = \e{4,9}$, and $\e{5,9}$ is the unique edge whose intersection with $\e{4,9}$ is $\{\V{9}\}$, we infer that $\phi(\e{5,9}) = \e{5,9}$. Since $\e{g}$ is the unique edge containing both $\V9 $ and $ \V{z}$, we infer that, if $\e{g} \in \Hef$, then $\phi(\e{g}) = \e{g}$. Since $\e{2,6}$ is the unique edge satisfying $|\e{2,6} \cap \e{4,9}| = 3$, $|\e{2,6} \cap \e{5,9}| = 2$, and $|\e{2,6} \cap \e{4,9} \cap \e{5,9}| = 0$, it follows by Observation~\ref{obs::monomorphisms}(d) that, if $\e{2,6} \in \Hef$, then $\phi(\e{2,6})=\e{2,6}$. Looking at the rows corresponding to $\e{5,9}$ and $\e{4,9}$ in Table~\ref{IntersectionTable}, we see that $(|x \cap \e{5,9}|, |x \cap \e{4,9}|)$ is distinct for every $x \in \mathcal{H} \setminus\{\e{g}, \e{2,6}\}$. This implies that $\phi(x) = x$ for every $x \in \Hef$ and thus $\phi$ is the identity by Lemma~\ref{3edges} contrary to our assumption. Therefore, from now on we will assume that $\phi(\e{4,9}) \neq \e{4,9}$. Since $\phi(\V{9}) = \V{9}$, it follows that $\phi(\e{4,9}) = \e{5,9}$ and $\phi(\e{5,9}) = \e{4,9}$. We distinguish between the following three cases.

\textbf{Case 1: $\{e,f\} \subseteq \{\e{r}, \e{g}, \e{a}\}$}. Observe that $\Hef$ contains $TP1$. Since, moreover, $\phi(\V{9}) = \V{9}$ and $\phi$ is not the identity by assumption, it follows from Lemma~\ref{TightPathPresent} that $\phi = (\V1\V8)(\V2\V7)(\V3\V6)(\V4\V5)(\V9)(\V{z})$. Let $x \in \{\e{r}, \e{g}, \e{a}\} \setminus \{e,f\}$. Then $\phi(x)$ is not an edge of $\mathcal{H}$ contrary to $\phi$ being a monomorphism.    

\textbf{Case 2: $\e{g} \in \Hef$}. As noted above, $\phi(\e{g}) = \e{g}$. Since $\e{4,9}$ is the unique edge intersecting $\e{g}$ in $3$ vertices, we have $\phi(\e{4,9}) = \e{4,9}$, contrary to our assumption that $\phi(\e{4,9}) \neq \e{4,9}$. 

\textbf{Case 3: $\e{g} \notin \Hef$ and $\e{r}, \e{a} \in \Hef$}. Since $\e{r}$ is the unique edge such that $\V{z} \in \e{r}$ and $\V9 \notin \e{r}$, it follows that $\phi(\e{r}) = \e{r}$. Similarly, since $\V{z} \notin \e{a}$, $\V9 \in \e{a}$, $\phi(\e{4,9}) = \e{5,9}$, and $\phi(\e{5,9}) = \e{4,9}$, it follows that $\phi(\e{a}) = \e{a}$. Then 
\begin{align*}
\{\phi(\V1)\} &= \phi(\e{4,9}) \cap \phi(\e{r}) \cap \phi(\e{a}) = \e{5,9} \cap \e{r} \cap \e{a} = \{\V8\},\\
\{\phi(\V2)\} &= \phi(\e{4,9}) \setminus (\phi(\e{r}) \cup \phi(\e{a})) = \e{5,9} \setminus(\e{r} \cup \e{a}) = \{\V7\},\\
\{\phi(\V3)\} &= \phi(\e{4,9}) \cap \phi(\e{r}) \setminus \phi(\e{a}) = \e{5,9} \cap \e{r} \setminus \e{a} = \{\V5\}.
\end{align*}
Since, moreover, $\phi(\e{4,9}) = \e{5,9}$, it follows that $\phi(\V4) = \V6$. Now, using $\phi(\e{r}) = \e{r}$ and $\phi(\e{a}) = \e{a}$, it is easy to see that $\phi(\V8) = \V1$ and thus $\phi(\V6) = \V4$, $\phi(\V5) = \V3$ and $\phi(\V7) = \V2$. However, then neither $\phi(\e{1,5})$ nor $\phi(\e{4,8})$ is an edge of $\mathcal{H}$. Since $\{\e{1,5}, \e{4,8}\} \setminus \{e,f\} \neq \emptyset$, this contradicts $\phi$ being a monomorphism.       
\end{proof}

\begin{lemma} \label{94or59}
Let $\phi : V(\mathcal{H}_{ef}) \rightarrow V(\mathcal{H})$ be a monomorphism. If $\{e,f\} \in \{\e{9,4}, \e{5,9}\} \times \{\e{r}, \e{g}, \e{a}\}$, then $\phi$ is the identity.
\end{lemma}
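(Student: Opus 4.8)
The plan is to handle the two symmetric cases $e = \e{9,4}$ and $e = \e{5,9}$ by a single argument, distinguishing among the three choices $f \in \{\e{r}, \e{g}, \e{a}\}$. The overall strategy mirrors the successful cases of the earlier lemmas: pin down the images of a few ``rigid'' edges using the degree sequence (Table~\ref{DegreeTable}), the intersection pattern (Table~\ref{IntersectionTable}), and Observation~\ref{obs::monomorphisms}(d), then invoke Lemma~\ref{3edges} to conclude that $\phi$ is the identity. Since only one of $\e{9,4}, \e{5,9}$ is deleted (the other survives in $\Hef$), I first want to show that the surviving long edge is fixed by $\phi$, and then propagate this to force every remaining edge to be fixed.

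First I would record the fixed points that are available for free: by Lemma~\ref{zFixed} we have $\phi(\V{z}) = \V{z}$. Next I would aim to fix $\V{9}$ and the surviving member of $\{\e{9,4}, \e{5,9}\}$. The natural tool is the degree bound from Observation~\ref{obs::monomorphisms}(a), namely $d_{\mathcal H}(\phi(x)) \geq d_{\Hef}(x) \geq d_{\mathcal H}(\phi(x)) - 2$; combined with the fact that $\V4$ is the unique vertex of degree $7$ and $\V5$ the unique vertex of degree $6$, this sharply restricts where high-degree vertices can go. Concretely, in each of the three subcases one of $\e{r}, \e{g}$ may have been deleted (when $f \in \{\e{r}, \e{g}\}$), so I would use Observation~\ref{obs::H5properties}(2),(3) together with the surviving long edge to fix $\e{1,5}$ or $\e{9,4}$, and then use that $\e{4,8}$ and $\e{5,9}$ are the unique edges with a size-$4$ intersection with their tight-path neighbours (as in Case 3 of Lemma~\ref{rg}) to fix the tail of the tight path. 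Once two suitably chosen edges are fixed, the uniqueness of the intersection-size pairs $(|x \cap \cdot|, |x \cap \cdot|)$ read off from Table~\ref{IntersectionTable} forces $\phi(x) = x$ for all surviving $x$, and Lemma~\ref{3edges} finishes the argument.

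The subcase $f = \e{a}$ is likely the cleanest, since both $\e{r}$ and $\e{g}$ then survive in $\Hef$; here I expect to be able to fix $\e{r}, \e{g}$ directly (using that $\V{z}$ is fixed and is the unique degree-$2$ vertex, so $\phi$ permutes the two edges through $\V{z}$, and the asymmetry $|\e{1,5} \cap \e{r}| = 3 \neq 2 = |\e{9,4}\cap \e{r}|$ pins down which is which) and then appeal to Lemma~\ref{rg}. The main obstacle will be the subcases $f \in \{\e{r}, \e{g}\}$, where one of the two edges through $\V{z}$ is missing: there is then genuine ambiguity about whether $\phi$ could send the surviving edge through $\V{z}$ to the wrong one of $\e{r}, \e{g}$, and one must rule out a nontrivial automorphism-like map. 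I would dispose of this exactly as in Case~3 of Lemma~\ref{94and59}: assume for contradiction that $\phi$ swaps the tight path's orientation (so $\phi$ is one of the nonidentity maps listed in Lemma~\ref{TightPathPresent}, since $\Hef$ still contains a length-$5$ tight path), compute the forced images of $\V1, \ldots, \V8$ explicitly from the fixed edges, and observe that some surviving edge — typically $\e{1,5}$ or $\e{4,8}$ — is mapped to a non-edge of $\mathcal H$, contradicting that $\phi$ is a monomorphism.
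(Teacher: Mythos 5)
Your outline correctly identifies the engine of the argument: since only one of $\e{9,4}, \e{5,9}$ is deleted, $\Hef$ still contains a tight path of length five, so Lemma~\ref{TightPathPresent} restricts $\phi$ to six explicit permutations. This is exactly how the paper proceeds (and the paper applies it uniformly to \emph{both} subcases, which makes its proof two lines). However, your finishing step in the subcases $f \in \{\e{r}, \e{g}\}$ fails concretely. You propose to eliminate the nonidentity permutations by showing that ``typically $\e{1,5}$ or $\e{4,8}$'' is mapped to a non-edge. In fact \emph{every one} of the five nonidentity permutations listed in Lemma~\ref{TightPathPresent} maps both $\e{1,5}$ and $\e{4,8}$ to edges of $\mathcal{H}$: the shift $(\V9\V1\V2\V3\V4\V5\V6\V7\V8)(\V{z})$ sends $\e{1,5} \mapsto \e{2,6}$ and $\e{4,8} \mapsto \e{5,9}$, the shift $(\V1\V9\V8\V7\V6\V5\V4\V3\V2)(\V{z})$ sends $\e{1,5} \mapsto \e{9,4}$ and $\e{4,8} \mapsto \e{3,7}$, and the involution $(\V1\V8)(\V2\V7)(\V3\V6)(\V4\V5)(\V9)(\V{z})$ swaps $\e{1,5} \leftrightarrow \e{4,8}$; the remaining two involutions behave similarly. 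These permutations preserve the family of consecutive $5$-sets in the cyclic order $\V1, \ldots, \V9$, so tight-path edges can never serve as witnesses (your analogy with Case~3 of Lemma~\ref{94and59} is misleading: the map ruled out there was \emph{not} one of these tight-path-preserving permutations). Worse, in the subcase $e = \e{5,9}$, $f = \e{r}$, the first shift above maps every surviving edge except $\e{a}$ to an edge ($\e{g} \mapsto \e{r}$, $\e{9,4} \mapsto \e{1,5}$, \ldots, $\e{4,8} \mapsto \e{5,9}$), so $\e{a}$ is the \emph{only} possible witness. This is precisely the paper's argument: $f \in \{\e{r},\e{g}\}$ guarantees $\e{a} \in \Hef$, and none of the five nonidentity permutations maps $\e{a}$ to an edge of $\mathcal{H}$.

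Two further soft spots. First, your preliminary plan to fix $\V9$ and the surviving member of $\{\e{9,4},\e{5,9}\}$ by degree bounds is not available here: Lemma~\ref{FixV9} requires both $\e{9,4}$ and $\e{5,9}$ to lie in $\Hef$, and when, say, $\{e,f\} = \{\e{9,4},\e{a}\}$ one only has $d_{\Hef}(\V9) = 2$, so Observation~\ref{obs::monomorphisms}(a) gives no useful constraint on $\phi(\V9)$. Second, in the subcase $f = \e{a}$ your claim that the intersection asymmetry ``pins down which is which'' is too quick: assuming $\phi$ swaps $\e{r}$ and $\e{g}$ forces only $\phi(\e{1,5}) = \e{9,4}$ (the unique edge meeting $\e{g}$ in three vertices), which is not yet a contradiction --- note that deleted edges may still occur as images --- and a further chain of intersection and injectivity arguments is needed before Lemma~\ref{rg} can be invoked. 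The paper sidesteps both issues: when $\e{r},\e{g} \in \Hef$, the fixed point $\phi(\V{z}) = \V{z}$ forces $\{\phi(\e{r}), \phi(\e{g})\} = \{\e{r},\e{g}\}$, hence $\phi(\V6) = \V6$ by Observation~\ref{obs::H5properties}(1), and the identity is the only permutation in the list of Lemma~\ref{TightPathPresent} fixing $\V6$. So your plan is repairable, but as written its decisive step (the choice of witness edges) would fail for every permutation it is supposed to exclude.
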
  

\begin{proof}
Since $|\{e,f\} \cap \{\e{9,4}, \e{5,9}\}| = 1$ by assumption, $\Hef$ contains either $TP1$ or $TP2$. Hence, $\phi$ must be one of the six permutations listed in Lemma~\ref{TightPathPresent}.  

Assume first that $\e{r}, \e{g} \in \Hef$. By Lemma~\ref{zFixed} we know that $\phi(\V{z}) = \V{z}$ and thus $\{\phi(\e{r}), \phi(\e{g})\} \subseteq \{\e{r}, \e{g}\}$. Therefore $\phi(\V{6}) = \V6$ holds by Observation~\ref{obs::H5properties}(1). This implies that $\phi$ is the identity since this is the only permutation listed in Lemma~\ref{TightPathPresent} which maps $\V6$ to itself.

Assume then that $\e{a} \in \Hef$. This implies that $\phi$ is the identity since this is the only permutation listed in Lemma~\ref{TightPathPresent} which maps $\e{a}$ to an edge of $\mathcal{H}$.
\end{proof}

\begin{lemma} \label{Property4proof}
Let $\phi : V(\mathcal{H}_{ef}) \rightarrow V(\mathcal{H})$ be a monomorphism. Then $\phi$ is the identity.
\end{lemma}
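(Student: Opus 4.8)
The plan is to reduce Property~(iv) to the lemmas already established by a case analysis on the unordered pair $\{e,f\}$ of edges deleted from $\mathcal{H}$. First I would record that $\phi(\V{z}) = \V{z}$ by Lemma~\ref{zFixed}, and then split the $\binom{9}{2}=36$ choices of $\{e,f\}$ according to the size of $\{e,f\} \cap \{\e{9,4}, \e{5,9}\}$. When $\{e,f\} \cap \{\e{9,4}, \e{5,9}\} = \emptyset$, both $\e{9,4}$ and $\e{5,9}$ survive in $\Hef$, so Lemma~\ref{94and59} immediately gives that $\phi$ is the identity; when exactly one of $\e{9,4}, \e{5,9}$ is deleted and its partner lies in $\{\e{r}, \e{g}, \e{a}\}$, Lemma~\ref{94or59} does the same. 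This disposes of $27$ of the $36$ pairs, leaving the nine pairs in which $\{e,f\} = \{\e{9,4}, \e{5,9}\}$, or exactly one of $\e{9,4}, \e{5,9}$ is deleted together with a ``middle'' path edge from $\{\e{1,5}, \e{2,6}, \e{3,7}, \e{4,8}\}$. The useful common feature of these nine pairs is that $\e{r}, \e{g}, \e{a}$ all survive in $\Hef$.

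For these nine cases I would first reduce to Lemma~\ref{rg}. Since $d_{\mathcal H}(\V{z}) = 2$, the only edges of $\mathcal{H}$ through $\V{z} = \phi(\V{z})$ are $\e{r}$ and $\e{g}$; as both survive and $\phi$ is an injective homomorphism, $\{\phi(\e{r}), \phi(\e{g})\} = \{\e{r}, \e{g}\}$. If $\phi$ fixes each of $\e{r}, \e{g}$, then Lemma~\ref{rg} finishes the case, so the whole difficulty is to rule out the ``swap'' $\phi(\e{r}) = \e{g}$, $\phi(\e{g}) = \e{r}$. In that event Observation~\ref{obs::H5properties}(1) forces $\phi(\V{6}) = \V{6}$ (as $\V6$ is the unique vertex off $\e{r} \cup \e{g}$ and $\V6 \in \e{a} \in \Hef$), and injectivity forces $\phi$ to interchange the vertex classes $A := \e{r} \setminus \{\V{z}\} = \{\V1,\V3,\V5,\V8\}$ and $B := \e{g} \setminus \{\V{z}\} = \{\V2,\V4,\V7,\V9\}$.

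Ruling out this swap is the step I expect to be the main obstacle, because a pure degree count does not suffice: deleting up to two edges lowers degrees in $\Hef$, so Observation~\ref{obs::monomorphisms}(a) alone cannot forbid $\phi$ from moving a high-degree vertex into the opposite class. My plan is to combine degrees with the intersection invariant of Observation~\ref{obs::monomorphisms}(d). The vertex $\V4$ is the unique vertex of degree $7$ and $\V5$ the unique one of degree $6$, while the class $A$ into which the swap sends $\V4$ has maximum degree $d_{\mathcal H}(\V5)=6$, and the only vertex of $B$ of degree exceeding $4$ is $\V4$. Tracking which deleted edge passes through $\V4$ and $\V5$, Observation~\ref{obs::monomorphisms}(a) forces $\phi(\V5) = \V4$ whenever $\e{9,4}$ is deleted and $\phi(\V4) = \V5$ whenever $\e{5,9}$ is deleted (and both when $\{e,f\}=\{\e{9,4},\e{5,9}\}$), after which the intersection invariant pins down $\phi$ on the rest of $\e{r}\cup\e{g}$. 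Substituting these forced values into a surviving path edge -- $\e{1,5}$ when it is present, otherwise an adjacent surviving edge -- I would check that the resulting candidate image is not an edge of $\mathcal{H}$, contradicting that $\phi$ is a monomorphism. I would organise the nine cases into the single sub-case $\{\e{9,4}, \e{5,9}\}$ together with the two symmetric families $\{\e{9,4}, \cdot\}$ and $\{\e{5,9}, \cdot\}$, exploiting the near-symmetry of $TP_1$ and $TP_2$ to avoid repeating the computation. Once the swap is excluded in every case, Lemma~\ref{rg} yields that $\phi$ is the identity, completing the verification of Property~(iv).
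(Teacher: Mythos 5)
Your strategy is sound, and for the part that carries the real difficulty it is genuinely different from the paper's. The two arguments coincide up to the reduction to the ``swap'': like you, the paper invokes Lemmas~\ref{94and59} and~\ref{94or59} to get $\{e,f\} \cap \{\e{9,4},\e{5,9}\} \neq \emptyset$ and $\{e,f\} \cap \{\e{r},\e{g},\e{a}\} = \emptyset$, and then uses Lemma~\ref{zFixed}, Observation~\ref{obs::H5properties}(1) and Lemma~\ref{rg} to conclude that $\phi$ must interchange $\e{r}$ and $\e{g}$ and hence fixes no vertex outside $\{\V{z},\V{6}\}$. The divergence is in how the swap is excluded. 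The paper never examines the nine surviving pairs one by one; instead it passes to the inverse map: $\phi^{-1}$ is a monomorphism defined on $\phi(\Hef) = \mathcal{H}_{e'f'}$, where $e',f'$ are the two edges of $\mathcal{H}$ missing from the image, so the same two lemmas constrain $\{e',f'\}$ as well; a degree count on both sides then eliminates $\e{5,9}$ from both pairs, an intersection argument forces $\e{1,5}$ into both pairs, giving $\{e,f\} = \{e',f'\} = \{\e{1,5},\e{9,4}\}$, and the unique tight path of length four in $\Hef$ finishes everything in one stroke via Observation~\ref{obs::monomorphisms}(b). Your plan avoids $\phi^{-1}$ entirely at the price of nine explicit cases, and its stated key steps are correct: the degree forcing $\phi(\V{5})=\V{4}$ (resp.\ $\phi(\V{4})=\V{5}$) when $\e{9,4}$ (resp.\ $\e{5,9}$) is deleted is exactly right, and each of the nine cases can indeed be closed with the tools you name. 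Be aware, however, that the closing step is heavier than ``substitute into one surviving edge.'' When $\e{5,9}$ is deleted and $\e{1,5}$ survives, it is as quick as you hope: $\phi(\e{1,5})$ must contain $\V{5}=\phi(\V{4})$ and avoid $\V{6}$ and $\V{z}$, forcing $\phi(\e{1,5})=\e{1,5}$, which contradicts $|\e{1,5}\cap\e{r}|=3$ via Observation~\ref{obs::monomorphisms}(d). But when $\e{9,4}$ is deleted together with a middle edge, the degree count only yields $\phi(\V{4})\in\{\V{3},\V{5}\}$, no single substitution produces a non-edge, and one needs a chain of forced images (for instance $\phi(\e{1,5})=\e{9,4}$ is forced first, and only a second surviving edge then maps to a non-edge); when $\e{1,5}$ is itself one of the deleted edges, one has to track how the five edges through $\V{6}$ are permuted among themselves. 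So your route works and is more elementary, but a complete write-up would be substantially longer and more error-prone than the paper's inverse-map argument, which is precisely what that trick was designed to avoid.
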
  

\begin{proof}
Let $e'$ and $f'$ denote the two edges of $\mathcal H\setminus \phi (\Hef)$. Suppose for a contradiction that $\phi$ is not the identity. Observe that this implies that $\phi^{-1}$ is a monomorphism from $\phi(\Hef)$ to $\mathcal{H}$ which is not the identity. 

Since $\phi$ is not the identity, it follows from Lemma~\ref{94and59} that $\{\e{4,9}, \e{5,9}\} \cap \{e,f\} \neq \emptyset$. By  Lemma~\ref{94or59} we then infer that $\{\e{r}, \e{g}, \e{a}\} \cap \{e,f\} = \emptyset$. Similarly, since $\phi^{-1}$ is a monomorphism which is not the identity, it follows from Lemma~\ref{94and59} that $\{\e{4,9}, \e{5,9}\} \cap \{e', f'\} \neq \emptyset$ and from Lemma~\ref{94or59} that $\{\e{r}, \e{g}, \e{a}\} \cap \{e', f'\} = \emptyset$.  

By Lemma~\ref{zFixed} we know that $\phi(\V{z}) = \V{z}$ and thus $\{\phi(\e{r}), \phi(\e{g})\} \subseteq \{\e{r}, \e{g}\}$. Therefore $\phi(\V{6}) = \V6$ holds by Observation~\ref{obs::H5properties}(1). By Lemma~\ref{rg} we know that $\phi(\e{r}) = \e{g}$ and $\phi(\e{g}) = \e{r}$, which implies that $\phi(x) \neq x$ for every $x \in V(\mathcal{H}) \setminus \{\V{z}, \V{6}\}$.

Since $\e{g}, \e{a}$ and $\e{4,9}$ are the only edges which do not contain $\V5$, and $\{e,f\} \setminus \{\e{g}, \e{a}, \e{4,9}\} \neq \emptyset$, it follows that $d_{\Hef}(\V5) \leq 5$. Since $\{e', f'\} \setminus \{\e{g}, \e{a}, \e{4,9}\} \neq \emptyset$, an analogous  argument shows that $d_{\phi(\Hef)}(\V5) \leq 5$.

Suppose for a contradiction that $\e{5,9} \in \{e,f\}$. Then $d_{\Hef}(\V4) \geq 6$ and thus $d_{\phi(\Hef)}(\phi(\V4)) \geq 6$ as well. Since, as noted above, $d_{\phi(\Hef)}(\V5) \leq 5$, it follows from Table~\ref{DegreeTable} that $\phi(\V4) = \V{4}$. However, this contradicts  the fact that $\phi$ does not fix any vertex of $V(\mathcal{H}) \setminus \{\V{z}, \V{6}\}$. It follows that $\e{4,9} \in \{e,f\}$. An analogous argument shows that $\e{4,9} \in \{e',f'\}$ as well.

Suppose for a contradiction that $\e{1,5} \notin \{e,f\}$. Then $|\phi(\e{1,5}) \cap \e{g}| = |\phi(\e{1,5}) \cap \phi(\e{r})| = |\e{1,5} \cap \e{r}| = 3$ holds by Observation~\ref{obs::monomorphisms}(d). Since $\e{4,9}$ is the only edge of $\mathcal{H}$ which intersects $\e{g}$ in $3$ vertices, it then follows that $\phi(\e{1,5}) = \e{4,9}$. However, this contradicts the fact that $\e{4,9} \in \{e',f'\}$. An analogous argument shows that $\e{1,5} \in \{e',f'\}$ as well.

We have thus shown that $\{e,f\} = \{e',f'\} = \{\e{1,5}, \e{4,9}\}$. Hence, $P = (\e{2,6}, \e{3,7}, \e{4,8}, \e{5,9})$ is the unique tight path of length $4$ in $\Hef$ and in $\phi(\Hef)$. Since $\phi(\V{z}) = \V{z}$ and since $\phi(P) = P$ holds by Observation~\ref{obs::monomorphisms}(b) it follows that $\phi(\V1) = \V1$ contrary to $\phi$ not fixing any vertex of $V(\mathcal{H}) \setminus \{\V{z}, \V{6}\}$.
\end{proof}

\section{Concluding remarks and open problems} \label{sec::openprob}

As noted in the introduction, this paper originated from Beck's open problem of deciding whether $\mathcal{R}(K_q, \aleph_0)$ is a draw or FP's win. While it would be very interesting to solve this challenging problem, there are several natural intermediate steps one could make in order to improve one's understanding of the problem. In this paper we constructed a $5$-uniform hypergraph ${\mathcal H}_5$ such that $\mathcal{R}^{(5)}(\mathcal{H}_5, \aleph_0)$ is a draw, thus refuting the intuition that, due to strategy stealing and Ramsey-type arguments, $\mathcal{R}^{(k)}(\mathcal{H}, \aleph_0)$ is FP's win for every $k$ and every $k$-graph $\mathcal{H}$. It would be interesting to replace ${\mathcal H}_5$ with a graph.

\begin{question} \label{q::graph}
Is there a graph $G$ such that $\mathcal{R}^{(2)}(G, \aleph_0)$ is a draw?
\end{question} 

Our proof that $\mathcal{R}^{(5)}(\mathcal{H}_5, \aleph_0)$ is a draw, relies heavily on the fact that $\mathcal{H}_5$ has a vertex of degree $2$. Since this is clearly not the case with $K_q$, for $q \geq 4$, it would be interesting to determine whether this condition is necessary.

\begin{question} \label{q::minimumDegree}
Given an integer $d \geq 3$, is there a $k$-graph $\mathcal{H}$ such that $\delta(\mathcal{H}) \geq d$ and $\mathcal{R}^{(k)}(\mathcal{H}, \aleph_0)$ is a draw?  
\end{question}   

Another important ingredient in our proof that $\mathcal{R}^{(5)}(\mathcal{H}_5, \aleph_0)$ is a draw, is the fact that SP can build ${\mathcal H}_5 \setminus \{z\}$ very quickly. A similar idea was used in~\cite{FH} and in~\cite{FHkcon} to devise explicit winning strategies for FP in various natural strong games. On the other hand, it was proved by Beck in~\cite{BeckFast} that building a copy of $K_q$ takes time which is at least exponential in $q$. Intuitively, not being able to build a winning set quickly, should not be beneficial to FP. This leads us to raise the following question.

\begin{question} \label{q::slow}
Is there a $k$-graph $\mathcal{H}$ with minimum degree at least $3$ such that $\mathcal{R}^{(k)}(\mathcal{H}, \aleph_0)$ is a draw and, for every positive integer $n$, FP cannot win $\mathcal{R}^{(k)}(\mathcal{H}, n)$ in less than, say, $1000 |V(\mathcal{H})|$ moves?
\end{question}

\section*{Acknowledgment}

Part of the research presented in this paper was conducted during the two joint Free University of Berlin--Tel Aviv University workshops on Positional Games and Extremal Combinatorics. The authors would like to thank Michael Krivelevich and Tibor Szab\'o for organizing these events.


\begin{thebibliography}{99}

\bibitem{BeckFast}
J. Beck, Ramsey games, \emph{Discrete Mathematics} 249 (2002), 3--30. 

\bibitem{TTT}
J. Beck, \textbf{Combinatorial games: Tic-tac-toe theory}, 
Encyclopedia of Mathematics and its Applications 114, 
Cambridge University Press, Cambridge, 2008.

\bibitem{Bowler}
N. Bowler, Winning an infinite combination of games, \emph{Mathematika} 58 (2012), 419--431.

\bibitem{CFGHL}
D. Clemens, A. Ferber, R. Glebov, D. Hefetz and A. Liebenau, Building spanning trees quickly in Maker-Breaker games, \emph{SIAM Journal on Discrete Mathematics} 29 (3) (2015), 1683--1705.

\bibitem{CFS}
D. Conlon, J. Fox and B. Sudakov, Hypergraph Ramsey numbers, \emph{J. Amer. Math. Soc.} 23 (2010), 247--266.

\bibitem{CFSsurvey}
D. Conlon, J. Fox and B. Sudakov, Recent developments in graph Ramsey theory, in: \textbf{Surveys in Combinatorics 2015, Cambridge University Press} (2015), 49--118.

\bibitem{ES}
P. Erd\H{o}s and J. L. Selfridge, On a combinatorial game, \emph{Journal of Combinatorial Theory Ser. A.} 14 (1973), 298--301.

\bibitem{FH}
A. Ferber and D. Hefetz, Winning strong games through fast strategies for weak games,
\emph{The Electronic Journal of Combinatorics} 18(1) 2011, P144.

\bibitem{FHkcon}
A. Ferber and D. Hefetz, Weak and strong $k$-connectivity games, \emph{European Journal of Combinatorics} 35 (2014), 169--183. 

\bibitem{GRS}
R. L. Graham, B. L. Rothschild and J. H. Spencer, \textbf{Ramsey theory}, 2nd edition, Wiley, 1990.  

\bibitem{HJ}
A. W. Hales and R. I. Jewett, Regularity and positional games, \emph{Transactions of the American Mathematical Society} 106 (1963), 222--229.

\bibitem{HKSS}
D. Hefetz, M. Krivelevich, M. Stojakovi\'c and T. Szab\'o, Fast winning strategies in Maker-Breaker games, \emph{Journal of Combinatorial Theory Ser. B.} 99 (2009), 39--47.

\bibitem{HKSSbook}
D. Hefetz, M. Krivelevich, M. Stojakovi\'c and T. Szab\'o, \textbf{Positional Games}, Oberwolfach Seminars 44, Birkh\"auser, 2014. 

\bibitem{Krivelevich}
M. Krivelevich, Positional games, \emph{Proceedings of the International Congress of Mathematicians (ICM)} Vol. 4 (2014), 355--379.

\bibitem{Leader}
I. B. Leader, Hypergraph games, Lecture notes, 2008. Available at http://tartarus.org/gareth/maths/notes/.

\bibitem{Ramsey}
F. P. Ramsey, On a problem of formal logic, \emph{Proc. London Math. Soc.} 30 (1930), 264--286.
   
\end{thebibliography}
\end{document}